\newtheorem{theorem}{Theorem}
\newtheorem{conjecture}[theorem]{Conjecture}
\newtheorem{claim}[theorem]{Claim}
\newcommand{\F}{\mathcal{F}}
\newcommand{\R}{\mathbb{R}}
\renewcommand{\S}{\mathbb{S}}
\newcommand{\proj}{\operatorname{proj}}
\newcommand{\rot}{\operatorname{rot}}
\newcommand{\Sd}{\mathbb{S}^{d}}
\newcommand{\Red}{\mathbb{R}^d}
\newcommand{\vol}{\operatorname{vol}}
\newcommand{\conv}{\operatorname{conv}}
\title{$k$-dimensional transversals for 
fat convex sets\thanks{Both authors were supported by the ERC Advanced Grant ``ERMiD'' and by the Thematic Excellence Program TKP2021-NKTA-62 of the National Research, Development and Innovation Office. The first author was also supported by the NKFIH grants FK132060 and SNN135643. The second author was also supported by the J\'anos Bolyai Research Scholarship of the Hungarian Academy of Sciences and by \'UNKP-23-5 of NRDIO.}}
\author[1,2]{Attila Jung}
\author[1,2]{Dömötör Pálvölgyi} 
\affil[1]{ELTE Eötvös Loránd University, Budapest, Hungary}
\affil[2]{HUN-REN Alfréd Rényi Institute of Mathematics, Budapest, Hungary}
\begin{document}

\maketitle

\begin{abstract}
    We prove a fractional Helly theorem for $k$-flats intersecting fat convex sets. A family $\mathcal{F}$ of sets is said to be $\rho$-fat if every set in the family contains a ball and is contained in a ball such that the ratio of the radii of these balls is bounded by $\rho$. We prove that for every dimension $d$ and positive reals $\rho$ and $\alpha$ there exists a positive $\beta=\beta(d,\rho, \alpha)$ such that if $\mathcal{F}$ is a finite family of $\rho$-fat convex sets in $\mathbb{R}^d$ and an $\alpha$-fraction of the $(k+2)$-size subfamilies from $\mathcal{F}$ can be hit by a $k$-flat, then there is a $k$-flat that intersects at least a $\beta$-fraction of the sets of $\mathcal{F}$. We prove spherical and colorful variants of the above results and prove a $(p,k+2)$-theorem for $k$-flats intersecting balls.
\end{abstract}

\section{Introduction}\label{sec:intro}

We say that a family $\mathcal{T}$ of geometric objects \emph{hits} or \emph{pierces} another family $\mathcal{F}$ if for every $F \in \mathcal{F}$ there is a $T \in \mathcal{T}$ such that $T \cap F \neq \emptyset$. If $\mathcal{T}$ consists of a single $k$-flat ($k$-dimensional affine subspace), then we say that $\mathcal{F}$ has a $k$-transversal. In this language, Helly's theorem \cite{helly1923mengen} states, that for a finite family $\mathcal{F}$ of convex sets in $\mathbb{R}^d$, if every subfamily of size $d+1$ has a $0$-transversal, i.e., a point contained in all $d+1$ sets, then the whole family has a $0$-transversal, i.e., a point contained in all the sets. Vicensini~\cite{vincensini1935figures} asked whether one can generalize Helly's theorem to $k$-transversals for $0 < k < d$, but this was shown to be false by Santal{\'o}~\cite{santalo1940theorem}, who constructed arbitrarily large families of convex sets in the plane without a $1$-transversal whose every proper subfamily has a $1$-transversal.

A natural next question in this direction is whether the following fractional Helly theorem of Katchalski and Liu has any analog for $k$-transversals.

\begin{theorem}[Katchalski and Liu \cite{katchalski1979problem}]\label{thm:fh}
    For every dimension $d$ there exists a function $\beta\colon (0,1] \to (0,1]$ with the following property.
    Let $\mathcal{F}$ be a finite family of convex sets in $\mathbb{R}^d$ and $\alpha \in (0,1]$. If at least $\alpha\binom{|\mathcal{F}|}{d+1}$ of the $(d+1)$-size subfamilies of $\mathcal{F}$ have a $0$-transversal, then there exists a subfamily of $\mathcal{F}$ of size at least $\beta(\alpha) |\mathcal{F}|$ which has a $0$-transversal.
\end{theorem}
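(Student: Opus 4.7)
The plan is to pass to the nerve of $\mathcal{F}$ and reduce the theorem to a purely combinatorial statement about face numbers of simplicial complexes satisfying a Helly-type condition.

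Let $n=|\mathcal{F}|$ and let $N(\mathcal{F})$ be the nerve of $\mathcal{F}$: the abstract simplicial complex with vertex set $\mathcal{F}$ whose simplices are the subfamilies having non-empty intersection. The hypothesis says that at least $\alpha\binom{n}{d+1}$ of the potential $d$-dimensional faces of $N(\mathcal{F})$ are actually present, and the conclusion says that $N(\mathcal{F})$ contains a face of dimension at least $\beta(\alpha) n - 1$. So the entire geometric content of the theorem is concentrated in a single structural property of $N(\mathcal{F})$, namely that it is $d$-representable: by classical Helly, every minimal non-face has cardinality at most $d+1$, and, more importantly, every induced subcomplex of $N(\mathcal{F})$ has trivial reduced homology in dimensions $\geq d$, i.e.\ $N(\mathcal{F})$ is $d$-Leray.

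The next step is to invoke Kalai's upper bound theorem for $d$-Leray complexes, which provides the required combinatorial fractional Helly: there is a function $\beta=\beta(d,\alpha)>0$ such that every $d$-Leray simplicial complex on $n$ vertices with at least $\alpha\binom{n}{d+1}$ faces of dimension $d$ contains a face whose vertex set has size at least $\beta n$. Applying this to $N(\mathcal{F})$ yields a subfamily of $\mathcal{F}$ of size $\geq \beta n$ with a common point, which is precisely a $0$-transversal.

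The main obstacle is the combinatorial lemma itself. Kalai's theorem is a genuinely non-trivial result proved via algebraic shifting and a Kruskal--Katona style bound for the face numbers of Leray complexes; without it, one can still prove something weaker (for instance, that many top-dimensional simplices force at least one face of dimension $d+1$), but extracting a linear-size simplex is exactly what requires the full strength of the upper bound theorem. An alternative, more geometric route avoids algebraic shifting and uses the colourful Helly theorem of Lov\'asz together with an averaging/cleaning argument: after randomly partitioning $\mathcal{F}$ into $d+1$ colour classes of roughly equal size, a positive fraction of rainbow $(d+1)$-tuples remain intersecting, and one then iteratively removes vertices of low colourful degree until the colourful Helly hypothesis applies to one of the classes; this yields a common point in a positive fraction of one class, as desired. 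Either route delivers the theorem; the quantitative dependence $\beta(\alpha)$ is the only thing that differs.
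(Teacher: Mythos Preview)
The paper does not prove Theorem~\ref{thm:fh}; it merely quotes it as the classical Katchalski--Liu result and uses it as background for the paper's own fractional Helly theorems for $k$-transversals of fat sets. So there is no ``paper's own proof'' to compare against.

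As a stand-alone argument, your main route is correct: the nerve of a finite family of convex sets in $\R^d$ is $d$-representable (hence $d$-Leray), and Kalai's upper bound theorem for such complexes indeed shows that $\alpha\binom{n}{d+1}$ many $d$-faces force a face of size $\Omega_\alpha(n)$. This is a well-known proof of fractional Helly, and in fact gives the optimal $\beta(\alpha)=1-(1-\alpha)^{1/(d+1)}$. Two minor remarks: (i) historically this is not the original 1979 Katchalski--Liu argument, which predates Kalai's algebraic-shifting machinery and is considerably more elementary; (ii) for the application here you only need the $d$-representable case (Kalai 1984), not the full $d$-Leray extension (Alon--Kalai--Matou\v{s}ek--Meshulam, Kalai--Meshulam), so your phrasing slightly overstates what is required.

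Your alternative sketch via colourful Helly, however, is not a proof as written. Colourful Helly needs \emph{every} rainbow $(d+1)$-tuple to intersect, and ``iteratively removing vertices of low colourful degree until the colourful Helly hypothesis applies'' is not a procedure that obviously terminates with a positive fraction of some class remaining---you could keep deleting until nothing is left. A correct argument along these lines exists (e.g.\ the colourful fractional Helly of B\'ar\'any--Fodor--Montejano--Oliveros--P\'or, cited in the paper), but it requires a genuine bootstrapping/double-counting step that your outline does not supply. If you want to offer this as a second proof, you would need to fill in that step explicitly.
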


Alon and Kalai showed that it can indeed be generalized to $(d-1)$-transversals (hyperplanes) intersecting convex sets.

\begin{theorem}[Alon and Kalai \cite{alon1995bounding}]\label{thm:FHforHyperplanes}
    For every dimension $d$ there exists a function $\beta\colon (0,1] \to (0,1]$ with the following property. Let $\mathcal{F}$ be a finite family of convex sets from $\mathbb{R}^d$ with $|\mathcal{F}|=n$.
    If for some positive $\alpha$ at least $\alpha\binom{n}{d+1}$ of the $(d+1)$-size subfamilies of $\mathcal{F}$ have a $(d-1)$-transversal, then there exists a subfamily of $\mathcal{F}$ of size at least $\beta(\alpha)n$ which has a $(d-1)$-transversal.
\end{theorem}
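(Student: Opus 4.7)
The plan is to reformulate the hyperplane-transversal question as a fractional Helly problem in the $d$-dimensional space of hyperplane parameters, and then invoke a topological fractional Helly theorem for set systems of bounded combinatorial and topological complexity.

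First, I would parameterize the affine Grassmannian of hyperplanes in $\R^d$ by, e.g., $\S^{d-1}\times\R$ modulo the antipodal identification $(u,t)\sim(-u,-t)$, a $d$-dimensional manifold. For each convex set $C_i\in\F$ let $\Phi_i$ be the set of hyperplane parameters corresponding to hyperplanes that meet $C_i$. While $\Phi_i$ is not convex in this parameter space, it has a tame structure: over every direction $u$, its fiber is the closed interval $[-h_{C_i}(-u),\,h_{C_i}(u)]$ determined by the support function of $C_i$. Equivalently, $\Phi_i$ is the complement of the union of two open convex regions (hyperplanes strictly above or strictly below $C_i$), hence a semi-algebraic set of bounded description complexity. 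A direct attempt to reduce to Theorem~\ref{thm:fh} by lifting each $\Phi_i$ to a convex set in some higher-dimensional space stalls precisely on this non-convexity.

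The statement then becomes a fractional Helly theorem for $\{\Phi_i\}$ with Helly number $d+1$, matching the dimension of the parameter space. My preferred route is Kalai's topological fractional Helly theorem, which applies to simplicial complexes of bounded Leray number. I would take $K$ to be the nerve of $\{\Phi_i\}$ and aim to show that $K$, and every induced subcomplex, is $d$-Leray (vanishing reduced homology in degrees $\geq d$). The key geometric input is that every finite intersection $\bigcap_{i\in\sigma}\Phi_i$ is a fiberwise interval over a subset of $\S^{d-1}$, so it deformation retracts onto a subset of $\S^{d-1}/\{\pm 1\}\cong \mathbb{RP}^{d-1}$; a parametric Mayer--Vietoris or nerve-theoretic argument should then convert this fiberwise control into a Leray bound on $K$.

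The hard part of this program is the Leray bound itself: the intersections $\bigcap_{i\in\sigma}\Phi_i$ can be topologically non-trivial (reflecting Santal\'o's construction, which shows that the exact Helly number for hyperplane transversals is unbounded), and one must argue carefully that only the homology inherited from $\mathbb{RP}^{d-1}$ survives up to the relevant range. Once this is in place, Kalai's theorem supplies a subset of the vertex set of size $\beta(\alpha)n$ that forms a simplex of $K$, which translates directly into a common hyperplane transversal for $\beta(\alpha)n$ of the $C_i$'s. A final technicality is the non-compactness of the hyperplane space; this can be dealt with by a standard compactness/approximation step (the family is finite and bounded, so only hyperplanes with offset in a bounded interval matter) or by working projectively from the outset.
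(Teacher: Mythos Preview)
The paper does not prove Theorem~\ref{thm:FHforHyperplanes}; it is quoted from Alon and Kalai~\cite{alon1995bounding} as a background result in the introduction, so there is no proof in the paper to compare your proposal against.

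As for your proposal on its own terms: the overall strategy of passing to the $d$-dimensional parameter space of hyperplanes and invoking a topological fractional Helly theorem via a Leray-number bound is close in spirit to how such results are actually proved, and in particular to the original Alon--Kalai argument. However, what you have written is a plan rather than a proof. You explicitly identify the Leray bound as ``the hard part'' and then defer it to ``a parametric Mayer--Vietoris or nerve-theoretic argument should then convert this fiberwise control into a Leray bound on $K$''---but this step \emph{is} essentially the entire content of the theorem, and you have not carried it out. The observation that each $\bigcap_{i\in\sigma}\Phi_i$ fibers over a subset of $\S^{d-1}/\{\pm1\}$ with interval fibers is correct and useful, but going from vanishing homology of the intersections above degree $d-1$ to a $d$-Leray bound on the nerve (which is what Kalai's theorem needs) requires a real argument, and you have only gestured at one. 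There is also a minor slip: the sets $\Phi_i$ are \emph{not} semi-algebraic in general, since the convex sets $C_i\in\F$ are arbitrary and their support functions need not be polynomial; your ``bounded description complexity'' remark is therefore unjustified, though it is ultimately irrelevant to the topological route you pursue.
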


Later, Alon, Kalai, Matou{\v s}ek and Meshulam showed that the above statement does not generalize to $1$-flats (lines) intersecting convex sets in $\mathbb{R}^3$.

\begin{theorem}[Alon, Kalai, Matou{\v s}ek and Meshulam \cite{alon2002transversal}]
    For every integers $n$ and $m$ there exists a family of at least $n$ convex sets in $\mathbb{R}^3$ such that any $m$ of them have a $1$-transversal, but no $m+4$ of them have a $1$-transversal.
\end{theorem}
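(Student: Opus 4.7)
The theorem is a negative (construction) statement, asserting that the fractional Helly property fails dramatically for $1$-flats in $\mathbb{R}^3$. The mysterious ``$+4$'' matches $\dim \operatorname{Gr}(1,3)=4$: the Grassmannian of affine lines in $\mathbb{R}^3$ is $4$-dimensional, so a generic system of $m+4$ ``line-hits-$C_i$'' constraints is overdetermined, which is ultimately what must make every $(m+4)$-subfamily transversal-free. My plan is a two-step build: first a \emph{local} example of size exactly $m+4$, then \emph{amplification} to size $\ge n$.

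For the local step I would start from the classical fact that five lines in general position in $\mathbb{R}^3$ share no common transversal, while any four of them lie on a unique ruled quadric and admit exactly two common transversals. Take five thin convex tubes $T_1,\dots,T_5$ around such lines, and add $m-1$ auxiliary convex bodies arranged along the $\binom{5}{4}\cdot 2 = 10$ transversals of $4$-subfamilies, so the family has $m+4$ sets in total. The whole family has no line transversal since no line meets $\ell_1,\dots,\ell_5$. For every $m$-subfamily we must exhibit a transversal; the $4$ omitted elements must include enough ``critical'' bodies to leave an $m$-subfamily whose remaining tubes and balls share at least one common line. Constructing the auxiliary bodies so that \emph{every} way of deleting $4$ of the $m+4$ restores transversality is combinatorially rigid and where the main technical work of the local step lies.

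To reach $|\mathcal{F}| \ge n$, I would then take $s = \lceil n/(m+4)\rceil$ copies of the local family placed in very generic, well-separated positions --- ideally along a generic high-degree algebraic curve in $\mathbb{R}^3$, so that by a Bezout-type count no line can pass near many copies at once. Any $m$-subfamily spreads across at most $m$ copies and can be transversalized by combining the local transversals with a genericity perturbation. Any $(m+4)$-subfamily either loads $m+4$ bodies into a single copy (where we inherit the local obstruction) or spans several copies, in which case the dimension count in $\operatorname{Gr}(1,3)$ together with Schubert-calculus-type intersection arguments rules out a simultaneous transversal after generic placement.

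The hardest step is the local construction: not \emph{some} $m$-subset but \emph{every} $m$-subset must be transversalizable, and an adversary may delete any $4$ of the $m+4$ bodies. This forces a highly symmetric placement of the auxiliary bodies with respect to all $4$-subsets of $\{\ell_1,\dots,\ell_5\}$ and their transversal pairs, and verifying the property probably requires a Schubert-cycle intersection computation on $\operatorname{Gr}(1,3)$. A secondary obstacle is controlling ``accidental'' line transversals across different copies in the amplification step; I would handle this by a generic perturbation argument, using that the locus of lines meeting $k$ generic convex bodies has codimension $\min(k,4)$ in the $4$-dimensional line space, so $(m+4)$-wise intersections are generically empty.
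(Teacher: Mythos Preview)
First, a framing point: this theorem is only \emph{quoted} in the present paper as a result of Alon, Kalai, Matou\v{s}ek and Meshulam \cite{alon2002transversal}; no proof of it is given here. So there is no ``paper's own proof'' to compare your proposal against --- you would have to consult the original source.

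That said, your outline has concrete gaps in both steps. In the \textbf{local step}, your family consists of five tubes $T_1,\dots,T_5$ plus $m-1$ auxiliary bodies, $m+4$ in total. For $m\ge 5$ one may delete four \emph{auxiliary} bodies; the remaining $m$-subfamily then still contains all five tubes $T_1,\dots,T_5$, which by your own choice have no common line transversal. So ``every $m$-subfamily has a $1$-transversal'' already fails at the local level, and no amount of symmetric placement of the auxiliaries can fix this: as long as all five tubes are present, no line meets them. In the \textbf{amplification step}, you place the copies ``in very generic, well-separated positions \dots\ so that no line can pass near many copies at once'' in order to block $(m+4)$-transversals; but the same placement also blocks $m$-transversals. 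Indeed, once $s\ge m$, pick one body from each of $m$ distinct copies: by your separation hypothesis no single line meets all of them. ``Combining the local transversals'' cannot help, since the conclusion requires a single line, not one per copy.

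The actual construction in \cite{alon2002transversal} is not a disjoint union of local gadgets; it builds one global family whose associated subsets of the $4$-dimensional space of lines are engineered so that every $m$ intersect while no $m+4$ do. If you want to reconstruct it, the right starting point is to produce, abstractly in a $4$-dimensional parameter space, $N$ regions with this exact intersection pattern, and only afterwards realize them as $\{\text{lines meeting }C_i\}$ for suitable convex $C_i\subset\mathbb{R}^3$.
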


By a suitable lifting argument this implies that there are no fractional Helly theorems for $k$-flats intersecting convex sets if $0 < k < d-1$. Following this example it is natural to look for fractional Helly-type theorems for special kinds of sets and $k$-flats with $0<k<d-1$. For example, Matou{\v s}ek proved, that $k$-flats intersecting a family of sets with bounded description complexity admit a fractional Helly theorem~\cite{matousek2004bounded}. A family of sets is said to have bounded description complexity, if each of the sets can be described using a bounded number of bounded degree polynomial inequalities.
\begin{theorem}[Matou{\v s}ek~\cite{matousek2004bounded}]
    For every dimension $d$ there exists a function $\beta\colon (0,1] \to (0,1)$ with the following property.
    Let $\mathcal{F}$ be a finite family of sets of bounded description complexity in $\mathbb{R}^d$ and $\alpha \in (0,1]$. If at least $\alpha\binom{|\mathcal{F}|}{(k+1)(d-k)+1}$ of the $((k+1)(d-k)+1)$-size subfamilies of $\mathcal{F}$ have a $k$-transversal, then there exists a subfamily of $\mathcal{F}$ of size at least $\beta(\alpha) |\mathcal{F}|$ which has a $k$-transversal.
\end{theorem}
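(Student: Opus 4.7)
The plan is to reduce the problem to a fractional Helly statement in the parameter space of $k$-flats. The affine Grassmannian of $k$-flats in $\R^d$ is a smooth semialgebraic manifold of dimension $D = (k+1)(d-k)$, which can be covered by finitely many semialgebraic charts (for instance, each chart parametrizes flats that project bijectively onto some fixed $k$-coordinate subspace by recording the affine map giving the other $d-k$ coordinates). For each $F_i \in \F$, define
\[
\tilde F_i = \{L : L \text{ is a } k\text{-flat in } \R^d \text{ with } L \cap F_i \neq \emptyset\},
\]
so that a subfamily of $\F$ has a $k$-transversal if and only if the corresponding $\tilde F_i$'s have a common point in the parameter space.

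The next step is to observe that bounded description complexity in $\R^d$ transfers to bounded description complexity of $\tilde F_i$ in each chart. Indeed, the predicate ``$L \cap F_i \neq \emptyset$'' is obtained by eliminating the coordinates of a point $x$ from the formula $\exists x\,(x \in L \wedge x \in F_i)$; applying Tarski--Seidenberg (or a direct argument) produces a semialgebraic description whose number of inequalities and whose degree depend only on $d$, $k$, and the original complexity bound. In particular, the family $\{\tilde F_i\}$ has bounded VC-dimension, and the same holds for its restrictions to each chart of the Grassmannian.

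Having moved the problem into a fixed $D$-dimensional semialgebraic setting with bounded complexity, the plan is to invoke the fractional Helly theorem for families of bounded description complexity in $\R^D$, which has Helly number $D+1$. With $D=(k+1)(d-k)$ this produces precisely the number $(k+1)(d-k)+1$ in the statement: an $\alpha$-fraction of the $((k+1)(d-k)+1)$-size subfamilies of $\F$ having a $k$-transversal translates to an $\alpha$-fraction of the $(D+1)$-size subfamilies of $\{\tilde F_i\}$ having a common point, giving a point of the parameter space covered by a $\beta(\alpha)$-fraction of the $\tilde F_i$, which is the required $k$-transversal. A small bookkeeping step handles the finite cover by charts (at least one chart carries a constant fraction of the intersections, by pigeonhole).

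The main obstacle is the fractional Helly step itself, because the sets $\tilde F_i$ are not convex (their geometry in the Grassmannian can be genuinely complicated). The classical Katchalski--Liu theorem does not apply, and one instead uses a fractional Helly theorem valid for bounded description complexity or bounded VC-dimension families. This can be established either topologically (showing, via Kalai--Meshulam, that the nerve of such a family is $D$-Leray because the nerve of a bounded complexity family in $\R^D$ has vanishing higher homology in high enough dimension) or combinatorially (via a first selection lemma together with Clarkson--Shor random sampling, which works for bounded VC-dimension range spaces). Either route produces a $\beta(\alpha)$ depending only on $d$, $k$, $\alpha$, and the complexity bound, finishing the proof.
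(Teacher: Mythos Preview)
The paper does not contain a proof of this statement: it is quoted as a result of Matou\v{s}ek \cite{matousek2004bounded} and used only as context and motivation for the paper's own fractional Helly theorem for fat convex sets. There is therefore no proof in the paper to compare your proposal against.

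That said, your outline is essentially the strategy Matou\v{s}ek uses. He first parametrizes $k$-flats by a $D$-dimensional space with $D=(k+1)(d-k)$, observes that the sets $\tilde F_i$ have bounded description complexity there (hence bounded dual shatter function of degree $D$), and then applies his general theorem that a set system with dual shatter function $O(m^D)$ has fractional Helly number $D+1$. One point worth flagging: in your final paragraph you treat the ``fractional Helly for bounded complexity / bounded VC-dimension families'' step as a black box, but that step \emph{is} the main theorem of Matou\v{s}ek's paper, and it is proved there combinatorially (via the dual shatter function and a counting argument), not via Kalai--Meshulam. The topological route you sketch would require showing that the nerve of an arbitrary bounded-complexity semialgebraic family in $\R^D$ is $D$-Leray, which is not obvious and is not how Matou\v{s}ek proceeds. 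So your reduction to the parameter space is correct and matches the original, but the heart of the argument---why bounded complexity gives fractional Helly number $D+1$---deserves more than the one-line gesture you give it.
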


We prove a fractional Helly theorem for fat convex sets. A family $\mathcal{F}$ of sets is $\rho$-fat, if for every $K \in \mathcal{F}$ there exist $x_K \in \mathbb{R}^d$ and $r_K,R_K \in \mathbb{R}$ such that $R_K \leq \rho r_K$ and $K$ contains a ball of radius $r_K$ centered at $x_k$ and is contained in a ball of radius $R_K$ centered at $x_k$.\footnote{Note that requiring these balls to be concentric is not a significant restriction; if $K$ is contained in some ball of radius $R$, then it is contained in a ball of radius $2R$ centered at any point of $K$.}

\begin{theorem}\label{thm:FHballs}
    For every $d$ and $\rho$ there exists a function $\beta\colon (0,1] \to (0,1)$ with the following property.
    Let $\mathcal{F}$ be a finite family of $\rho$-fat convex sets in $\mathbb{R}^d$, $0\le k\le d$ an integer, and $\alpha \in (0,1]$.
    If at least $\alpha \binom{|\mathcal{F}|}{k+2}$ of the $(k+2)$-tuples of $\mathcal{F}$ have a $k$-transversal, then $\mathcal{F}$ has a subfamily of size at least $\beta(\alpha)|\mathcal{F}|$ which has a $k$-transversal.
\end{theorem}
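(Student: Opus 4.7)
The strategy is to reduce, via discretization of the Grassmannian and orthogonal projection, to a fractional Helly statement for $0$-transversals of fat convex sets, and in turn to a threshold-$2$ base case that essentially uses fatness.

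First, I would cover the compact Grassmannian $G(k,d)$ by a finite $\delta$-net $\mathcal{N}$ of size $C=C(d,\rho)$, with $\delta$ small in terms of $\rho$. Every $(k+2)$-tuple admitting a $k$-transversal has such a transversal whose direction lies within $\delta$ of some element of $\mathcal{N}$, so pigeonhole produces a single direction $L^{*}\in\mathcal{N}$ that is ``good'' for at least $(\alpha/C)\binom{n}{k+2}$ tuples.

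Second, I would project $\mathcal{F}$ orthogonally onto $(L^{*})^{\perp}\cong\mathbb{R}^{d-k}$. Using fatness, I would replace each $K$ by a slightly shrunken subset $K'\subseteq K$ that remains $O(\rho)$-fat (for instance, the Minkowski erosion of $K$ by a small fraction of $r_K$), arranged so that whenever a $k$-transversal with direction $\delta$-close to $L^{*}$ meets $K$, the $k$-flat with direction exactly $L^{*}$ through an appropriate reference point still meets $K'$. Hence an $\Omega(\alpha)$-fraction of $(k+2)$-tuples of the projected family $\{\pi(K')\}\subset\mathbb{R}^{d-k}$ share a common point.

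Third, a short double-counting reduces the task to pairs: if $\alpha'\binom{n}{k+2}$ $(k+2)$-tuples of an arbitrary family share a common point, then $\alpha'\binom{n}{2}$ pairs intersect, because each good tuple contains $\binom{k+2}{2}$ intersecting pairs while each pair lies in exactly $\binom{n-2}{k}$ tuples. The problem therefore reduces to the $k=0$ case of Theorem~\ref{thm:FHballs}: for $\rho$-fat convex sets in any dimension, an $\alpha$-fraction of intersecting pairs forces a $\beta(\alpha)$-fraction to share a common point. The base case itself would follow from a Danzer-type piercing theorem (pairwise-intersecting $\rho$-fat convex sets in $\mathbb{R}^{m}$ admit a piercing set of size $c(m,\rho)$), combined with standard piercing-to-fractional-Helly techniques---either the Alon--Kleitman framework, or a direct extraction of a dense subfamily together with pigeonholing over the piercing set.

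The principal obstacle is the fourth step---fractional Helly at threshold $2$ for $\rho$-fat convex sets---since for general convex sets the classical threshold is $d+1$, and lowering it all the way to $2$ is where fatness is genuinely needed, through the Danzer-type piercing bound. A secondary technical subtlety is the fatness bookkeeping in step two: the shrinking $K\mapsto K'$ followed by the orthogonal projection must preserve $\rho$-fatness up to a universal constant, and the quantitative relation between the net fineness $\delta$, the shrinking radius, and $\rho$ must be tuned so that every $(k+2)$-tuple good for $L^{*}$ actually contributes a point-transversal of $\{\pi(K')\}$.
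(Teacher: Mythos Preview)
Your approach has a genuine gap at the projection step (your steps~2--4), rooted in the fact that $\rho$-fatness bounds only the \emph{ratio} $R_K/r_K$ for each set individually, not the absolute scales. A single $(k+2)$-tuple may contain sets whose mutual distance $D$ is arbitrarily large compared to $\min_i r_{K_i}$. If such a tuple has a $k$-transversal $F$ with direction $\delta$-close to $L^*$, pick $p_i\in F\cap K_i$; then the projections $\pi(p_i)\in(L^*)^\perp$ can differ by as much as $\delta\cdot D$. Since $D$ is not controlled by any of the $r_{K_i}$, no Minkowski erosion by ``a small fraction of $r_K$'' can absorb this error, and there is no single reference point around which rotating $F$ to direction $L^*$ keeps it inside all the $K_i'$ simultaneously. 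Your statement ``the $k$-flat with direction exactly $L^*$ through an appropriate reference point still meets $K'$'' is true for each $K$ separately, but the reference point depends on $K$, so you do not obtain a common point for the projected tuple $\{\pi(K_i')\}$. In short, the Grassmannian net argument works for $(r,R)$-fat families (uniform scale) but breaks for general $\rho$-fat families; the paper explicitly points this out in the introduction.

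The paper's proof circumvents exactly this difficulty, and it is worth seeing how. Each good $(k+2)$-tuple is charged to its member $K_0$ of \emph{smallest} bounding radius $R_{K_0}$; a heavily-charged $K_0$ is normalized to have $R_{K_0}=1$, and every other set in a tuple charged to $K_0$ is at least this large. One then replaces each $K$ by $K+B(0,1)$ (an enlargement, not a shrinking---harmless for fatness since $R_K\ge 1$), so that any $k$-transversal through $K_0$ can be translated to pass through the origin. Crucially, one now projects \emph{centrally to the unit sphere} $\mathbb{S}^{d-1}$ rather than orthogonally to a fixed hyperplane: this collapses the scale issue and turns the problem into a spherical $(k-1)$-transversal question in one dimension lower, handled by a spherical companion theorem proved simultaneously by induction. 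The final step---passing from a linear $k$-flat hitting the enlarged sets back to an affine $k$-flat hitting the original sets---is done by projecting to the orthogonal complement and applying a volumetric pigeonhole that again exploits fatness. Your threshold-$2$ base case via a Danzer-type piercing bound is a reasonable alternative to the paper's direct volumetric argument for $k=0$, but the reduction to $k=0$ is where your plan fails.
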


As discussed above, the $k=d-1$ case is known to hold even without the fatness assumption \cite{alon1995bounding}, but the 
$0<k<d-1$ case is known to be false without fatness, even if we replace the $(k+2)$-tuples in the assumption with $m$-tuples for any finite $m$ \cite{alon2002transversal}.

Note that as any convex body in $\R^d$ can be made $d$-fat with an affine transformation by John's theorem \cite{john1948}, our result also holds for the family of homothets of any fixed convex body. 
For these, Matou{\v s}ek's result would only work if the convex body has bounded union complexity, like a ball or a polyhedron, and even in this case would only give $((k+1)(d-k)+1)$-size subfamilies, which is improved\footnote{This is indeed an improvement because by averaging if some $\alpha$ fraction of the $((k+1)(d-k)+1)$-tuples intersect, then also some $\alpha$ fraction of the $(k+2)$-tuples intersect.} by our $(k+2)$-size subfamilies for all $k<d-1$. (This is an improvement because by averaging if some $\alpha$ fraction of the $((k+1)(d-k)+1)$-tuples intersect, then also some $\alpha$ fraction of the $(k+2)$-tuples intersect.)

The proof of our main Theorem~\ref{thm:FHballs} can be found in Section \ref{sec:fh}. Along the way we prove spherical and colorful variants of the result as well.\\

Now we turn to the history of our other main result.
A celebrated generalization of Helly's theorem is the following $(p,q)$-theorem of Alon and Kleitman.

\begin{theorem}[Alon and Kleitman \cite{alon1992piercing}]\label{thm:pq}
For every $p \geq d+1$ there exists a $C$ such that the following holds.
If $\mathcal{F}$ is a family of compact convex sets in $\mathbb{R}^d$ such that among any $p$ members of $\mathcal{F}$ there are $d+1$ with a $0$-transversal, then there exist at most $C$ points hitting all of $\mathcal{F}$.
\end{theorem}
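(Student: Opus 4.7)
The plan is to deduce a fractional Helly hypothesis from the $(p,d+1)$-condition, apply Theorem~\ref{thm:fh} to obtain a ``piercing a positive fraction'' property that holds hereditarily, convert this to a fractional transversal of bounded weight via LP duality, and round it to an integer transversal via the weak $\epsilon$-net theorem for convex sets.

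First, I would deduce the fractional Helly hypothesis by a double-counting argument: since every $p$-subfamily of $\mathcal{F}$ contributes at least one $(d+1)$-tuple with a common point, and each $(d+1)$-tuple lies in exactly $\binom{|\mathcal{F}|-d-1}{p-d-1}$ $p$-subfamilies, at least a $1/\binom{p}{d+1}$ fraction of all $(d+1)$-tuples of $\mathcal{F}$ have a common point. Writing $\alpha_0 := 1/\binom{p}{d+1}$ and applying Theorem~\ref{thm:fh} produces a single point hitting at least $\beta_0 |\mathcal{F}|$ sets of $\mathcal{F}$, where $\beta_0 := \beta(\alpha_0)$ depends only on $p$ and $d$.

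Next, since the $(p,d+1)$-condition is inherited by every subfamily (and, by a replication trick, by every finitely supported rational probability distribution $\nu$ on $\mathcal{F}$), the previous step upgrades to: for every such $\nu$, some point of $\mathbb{R}^d$ lies in sets of total $\nu$-mass at least $\beta_0$. By the LP duality / minimax theorem — applied after replacing $\mathbb{R}^d$ by a finite set of representative points, one per cell of the arrangement of $\mathcal{F}$, so that the program is finite-dimensional — this is equivalent to the existence of a finitely supported probability measure $\mu$ on $\mathbb{R}^d$ with $\mu(F) \geq \beta_0$ for every $F \in \mathcal{F}$, i.e., a fractional transversal of total weight $1/\beta_0$.

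Finally, the weak $\epsilon$-net theorem of Alon--B\'ar\'any--F\"uredi--Kleitman applied to $\mu$ with $\epsilon = \beta_0$ yields a set $N \subseteq \mathbb{R}^d$ of size $f_d(\beta_0)$ meeting every convex body of $\mu$-measure at least $\beta_0$; in particular $N$ pierces every $F \in \mathcal{F}$, so one may take $C := f_d(\beta_0)$. I expect the hardest ingredient to be the weak $\epsilon$-net theorem itself: its proof is a substantial piece of combinatorial geometry (going through a first-selection-lemma or colored-Tverberg-style argument), and the bound $f_d(\epsilon)$ ultimately governs the dependence of $C$ on $p$ and $d$. The LP-duality step is mostly bookkeeping once one restricts to a finite arrangement, but care is needed to ensure that replicating sets with rational weights preserves the $(p,d+1)$ hypothesis.
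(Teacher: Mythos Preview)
Your proposal is correct and follows exactly the original Alon--Kleitman strategy; the paper itself does not give a self-contained proof of Theorem~\ref{thm:pq} but sketches the same argument in Section~\ref{sec:pq} via the abstract framework of Alon--Kalai--Matou\v{s}ek--Meshulam, where their Theorem~8 packages your double-counting plus LP-duality steps and their Theorem~9 packages the weak $\varepsilon$-net rounding. The only cosmetic difference is that you invoke the Alon--B\'ar\'any--F\"uredi--Kleitman weak $\varepsilon$-net theorem directly, while the paper observes that it also follows from the fractional Helly property for $\mathcal{H}^\cap$ (which coincides with $\mathcal{H}$ since convex sets are closed under intersection).
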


The fractional Helly theorem of Katchalski and Liu plays a crucial role in the proof of Theorem~\ref{thm:pq}. It would be very interesting to know whether our fractional Helly theorem for $k$-flats intersecting $\rho$-fat convex sets has such a corollary.

\begin{conjecture}
    For every $p,q,d$ and $\rho$ there exists a $C$ such that the following holds. If $\mathcal{F}$ is a family of $\rho$-fat compact convex sets in $\mathbb{R}^d$ such that among any $p$ members of $\mathcal{F}$ there are $k+2$ with a $k$-transversal, then there exist at most $C$ $k$-flats hitting all of $\mathcal{F}$.
\end{conjecture}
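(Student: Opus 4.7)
The plan is to run the Alon--Kleitman machinery for deriving a $(p,q)$-theorem from a fractional Helly theorem, with Theorem~\ref{thm:FHballs} standing in for Katchalski--Liu. Fix $p \geq k+2$ as well as $\rho, d$, and let $\mathcal{F}$ be a family of $\rho$-fat compact convex sets in $\mathbb{R}^d$ satisfying the $(p, k+2)$-condition. The argument splits into two halves: first, bound the fractional $k$-transversal number $\tau^*_k(\mathcal{F})$ by a constant $C_0 = C_0(p, k, d, \rho)$; then convert this to a bound on the integer piercing number via a weak $\epsilon$-net theorem for $k$-flats meeting $\rho$-fat convex sets.

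For the fractional bound, consider LP duality on the space $\mathcal{K}$ of $k$-flats: $\tau^*_k(\mathcal{F})$ equals the supremum of $\sum_F y_F$ over nonnegative weights $y$ with $\sum_{F : F \cap L \neq \emptyset} y_F \leq 1$ for every $L \in \mathcal{K}$. Given such weights, normalized so $\sum_F y_F = 1$, a standard sampling argument based on the $(p, k+2)$-condition shows that a positive fraction $\alpha = \alpha(p, k)$ of the $y$-weighted $(k+2)$-tuples admit a common $k$-transversal. Applying Theorem~\ref{thm:FHballs} in a weighted or sampled form then yields a single $k$-flat $L^*$ hitting $y$-mass at least $\beta(\alpha, d, \rho)$; combined with the constraint at $L^*$ this forces $\sum_F y_F \leq 1/\beta(\alpha, d, \rho)$, so $\tau^*_k(\mathcal{F}) \leq C_0$.

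The second half requires a weak $\epsilon$-net theorem for $k$-flats and $\rho$-fat convex sets: a set $N \subseteq \mathcal{K}$ of size at most $h(\epsilon, d, k, \rho)$ meeting every $F$ in any subfamily of $\mathcal{F}$ of relative size at least $\epsilon$ that admits a common $k$-transversal. For each $F \in \mathcal{F}$, fatness gives concentric balls $B(x_F, r_F) \subseteq F \subseteq B(x_F, \rho r_F)$, so $A_F = \{L \in \mathcal{K} : L \cap F \neq \emptyset\}$ is sandwiched between the two corresponding ball-ranges. Since $k$-flats meeting balls form a range space of bounded VC-dimension (depending only on $d, k$), the sandwich controls the VC-dimension of $\{A_F : F \in \mathcal{F}\}$ in terms of $d, k, \rho$, and combined with Theorem~\ref{thm:FHballs} the Alon--Kalai--Matou{\v s}ek--Meshulam scheme then produces the desired $N$. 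The fractional transversal from the first half together with this weak $\epsilon$-net, plugged into the standard discretization step, produces an integer piercing set of size at most some $C(p, k, d, \rho)$.

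The main obstacle is making this second half quantitative: bounding the VC-dimension of $\{A_F : F \in \mathcal{F}\}$ with the correct dependence on $\rho$ and handling the sandwich between concentric balls cleanly. Intuitively $\rho$-fatness forces each $A_F$ to be close to a ``ball range,'' but extracting explicit weak $\epsilon$-net bounds from this intuition, or bypassing VC-dimension entirely via the abstract fractional-Helly-to-weak-$\epsilon$-net framework, requires further geometric care; I would expect the final constant $C$ to depend polynomially on $\rho$ through the weak $\epsilon$-net bound.
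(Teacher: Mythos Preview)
The statement you are trying to prove is a \emph{conjecture} in the paper, not a theorem; the authors explicitly state that the missing ingredient is precisely the weak $\varepsilon$-net result you invoke in your second half, and they explain why neither of the two standard routes to it works. Your first half (bounding $\tau^*_k$ from the $(p,k+2)$-condition via LP duality and Theorem~\ref{thm:FHballs}) is correct and is exactly Theorem~8 of Alon--Kalai--Matou\v sek--Meshulam, but your second half contains a genuine gap that you have not closed.

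Specifically, the claim that the sandwich $B(x_F,r_F)\subseteq F\subseteq B(x_F,\rho r_F)$ ``controls the VC-dimension of $\{A_F\}$ in terms of $d,k,\rho$'' is false as stated: there is no principle saying that a range sandwiched between two ranges from a bounded-VC family itself lies in a bounded-VC family. A $\rho$-fat convex body can have an arbitrarily complicated boundary inside the annulus, and the set of $k$-flats touching it can shatter arbitrarily large sets of flats (think of a convex body with many small bumps on its surface, each controllable independently by tangent flats). So Theorem~\ref{thm:eps} (Haussler--Welzl) is not available. Your fallback, the abstract AKMM route (Theorem~\ref{thm:weakeps}), requires fractional Helly for $\mathcal{H}^\cap$, i.e., for \emph{intersections} of the sets $A_F$. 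But the intersection of several $\rho$-fat convex bodies is in general not $\rho$-fat (indeed need not be fat at all), so Theorem~\ref{thm:FHballs} does not apply to $\mathcal{H}^\cap$; the paper points this out explicitly. Thus both branches of your second half collapse, and what remains is exactly the open problem the paper poses as its final conjecture, that $\tau(\mathcal{H})\le f(\tau^*(\mathcal{H}))$ for this $\mathcal{H}$.
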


We do not even know whether the statement holds if we replace $k+2$ by any $m>k+2$ which does not depend on $\mathcal{F}$. The missing ingredient is a weak $\varepsilon$-net theorem for $k$-flats intersecting $\rho$-fat convex bodies. In Section~\ref{sec:pq}, we discuss how two of the main approaches used to prove the existence of (weak) $\varepsilon$-nets fails in our case. As one of the standard approaches can be used if we replace $\rho$-fat convex sets with balls ($1$-fat convex sets), we obtain the following corollary. 

\begin{theorem}\label{thm:pqballs}
    For every three positive integers $d$, $k < d$ and $p \geq k+2$ there exists a $C=C(d,k,p)$ such that the following holds. If $\mathcal{F}$ is a (possibly infinite) family of closed balls in $\mathbb{R}^d$ such that among any $p$ members of $\mathcal{F}$ there are $k+2$ that can be hit by a single $k$-flat, then there exist at most $C$ $k$-flats hitting all of $\mathcal{F}$.
\end{theorem}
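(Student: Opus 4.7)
The approach is the Alon--Kleitman $(p,q)$-framework, which derives a $(p,q)$-theorem from two ingredients: a fractional Helly theorem and a weak $\varepsilon$-net theorem. The range space to work in has ground set $X$ equal to the space of $k$-flats in $\mathbb{R}^d$ (parametrized as a semi-algebraic set through a chart of the affine Grassmannian) and, for each ball $B \in \mathcal{F}$, the range $R_B = \{L \in X : L \cap B \neq \emptyset\}$. Piercing $\mathcal{F}$ by $C$ $k$-flats is the same as piercing $\{R_B\}_{B\in\mathcal{F}}$ by $C$ points of $X$.

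The first ingredient is Theorem~\ref{thm:FHballs} with $\rho = 1$: since balls are $1$-fat, this immediately supplies a fractional Helly theorem at level $k+2$ for our range space. The second ingredient comes from noticing that ``$L$ hits $B(c, r)$'' is equivalent to the polynomial inequality $\dist(c, L)^2 \leq r^2$ of degree $2$ in the joint parameters of $L$ and $(c, r)$. By the Goldberg--Jerrum bound on VC-dimension of semi-algebraic range spaces (built on the Milnor--Thom sign-pattern estimate), both the primal range space $(X, \{R_B\})$ and its dual have VC-dimension bounded by a constant depending only on $d$ and $k$. In particular, Haussler--Welzl $\varepsilon$-nets of size $O_{d,k}(\varepsilon^{-1}\log \varepsilon^{-1})$ exist in $(X, \{R_B\})$.

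The standard Alon--Kleitman LP-duality argument now combines the two ingredients: the $(p, k+2)$-condition together with the fractional Helly theorem implies, via duality applied to the natural fractional matching/piercing LP on $(X, \{R_B\})$, that the fractional piercing number satisfies $\tau^*(\mathcal{F}) \leq C_0(d, k, p)$. Applying the $\varepsilon$-net theorem to the optimal fractional piercing measure with $\varepsilon = 1/(2C_0)$ then produces an integer piercing set of $k$-flats of size $C = C(d, k, p)$, as required.

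The main obstacle is the VC-dimension bound, and it is exactly this step that forces us to restrict from general $\rho$-fat convex bodies to balls. The algebraic description ``$L$ hits $B$'' has bounded complexity when $B$ is a ball, so Haussler--Welzl applies directly; but no such bounded-complexity description need exist for arbitrary $\rho$-fat convex bodies (which need not even be semi-algebraic), and consequently a substitute weak $\varepsilon$-net theorem, as discussed in the paragraph preceding the statement, remains missing outside the ball case.
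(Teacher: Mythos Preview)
Your proposal is correct and follows essentially the same route as the paper: set up the range space of $k$-flats, use Theorem~\ref{thm:FHballs} (with $\rho=1$) as the fractional Helly ingredient to bound $\tau^*$ via the Alon--Kleitman LP-duality argument, and then invoke Haussler--Welzl with the VC-dimension bounded by Milnor--Thom (since ``$L$ hits $B$'' is a single quadratic inequality) to pass from $\tau^*$ to $\tau$. The only item you leave implicit is the one-line compactness reduction from a possibly infinite $\mathcal{F}$ to finite subfamilies, which the paper also dismisses as standard.
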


Theorem~\ref{thm:pqballs} follows from results in Section \ref{sec:pq}. We show spherical and colorful variants of this result as well. Note that a weaker version of Theorem~\ref{thm:pqballs}, where we replace $k+2$ by $(k+1)(d-k)+1$ follows from the more general results of Matou{\v s}ek~\cite{matousek2004bounded}.

Keller and Perles showed that if $\mathcal{F}$ is an infinite family of $d$-dimensional balls, and among any $\aleph_0$ balls some $k+2$ balls have a $k$-transversal, then the whole family can be pierced with finitely many $k$-dimensional affine subspaces \cite{keller2022aleph_0}. In a forthcoming paper \cite{jung2024infinite} it is shown that this infinite variant follows from our Theorems~\ref{thm:FHballs} and \ref{thm:pqballs} with a purely combinatorial proof. 

Our Theorem~\ref{thm:pqballs} generalizes to certain uniformly fat families as follows. A family $\mathcal{F}$ of sets in $\mathbb{R}^d$ is called $(r,R)$-fat if for every $K \in \mathcal{F}$ there is an $x_K \in \mathbb{R}^d$ such that $B(x_K, r) \subseteq K \subseteq B(x_K, R)$. Note that an $(r,R)$-fat family is $\rho$-fat with $\rho = \frac{R}{r}$ but a $\rho$-fat family might not be $(r,R)$-fat for any fixed $r,R$ as it can contain arbitrarily small and large sets.
Because of this, the argument described in the next paragraph, does not apply directly to $\rho$-fat sets in $\mathbb{R}^d$.

Fractional Helly and $(p,q)$-type results about $k$-transversals of (congruent) balls generalize easily to (not necessarily convex) $(r,R)$-fat sets as were described in \cite{ghosh2022heterochromatic,keller2022aleph_0}. We give a short summary of the argument here. If among every $p$ members of an $(r,R)$-fat family $\mathcal{F}$, there are $k+2$ which have a $k$-transversal, then so do the $(k+2)$-tuples of balls of radius $R$ containing them. We can apply Theorem~\ref{thm:pqballs} to the family of $R$-balls to obtain a bounded size family $\mathcal{T}$ of $k$-flats hitting all the balls. By considering parallel $k$-flats close to members of $\mathcal{T}$, we can conclude that bounded many of them stabs the $r$-balls contained in the members of $\mathcal{F}$.
Therefore, there is a bounded size family of $k$-flats hitting every member of $\mathcal{F}$. This argument works even in the case when the members of $\mathcal{F}$ are not convex. Not even connectedness is required, only $(r,R)$-fatness.

Holmsen and Matou\v sek asked whether a fractional Helly number exists for disjoint translates of a convex set in $\R^3$ with respect to lines \cite{holmsen2004}.
As any convex body in $\mathbb{R}^3$ can be made $3$-fat with an affine transformation, our Theorem~\ref{thm:FHballs} answers their question. But we cannot really take credit for this, as a positive answer already follows from a result of Matou\v sek~\cite{matousek2004bounded} combined with the trick of making the set $(1,3)$-fat with an affine transformation, and then using the reduction to balls that we sketched earlier.
This method would only give that the fractional Helly number is at most $5$ (without using our Theorem \ref{thm:FHballs}); it was observed by Dobbins and Holmsen a few years ago (personal communication), independently of us, that this can be improved to the optimal $3$, and they have also obtained fractional Helly theorems for $(r,R)$-fat sets with respect to $k$-flats (unpublished).

As another direction generalizing Helly-type theorems to $k$-transversals, we mention a result of Hadwiger which shows that although Helly's theorem does not generalize directly to $1$-transversals, a finite family of pairwise disjoint convex sets in the plane has a $1$-transversal if and only if the family has a linear ordering such that any $3$-tuple of convex sets has a $1$-transversal which is consistent with the ordering \cite{hadwiger1957} (see the definition there). Hadwiger's result has been generalized to $(d-1)$-transversals by Goodman, Pollack and Wenger \cite{goodman1993geometric}, and very recently to $k$-transversals for any $0 \leq k < d$ by McGinnis and Sadovek~\cite{mcginnis2024necessary}.

\section{Fractional Helly Theorems}\label{sec:fh}

In this section we prove Theorem \ref{thm:FHballs} in the below more general, colorful form, which also generalizes the colorful version of the fractional Helly theorem due to B{\'a}r{\'a}ny, Fodor, Montejano, Oliveros, and P{\'o}r \cite{barany2014colourful}. 

\begin{theorem}[B{\'a}r{\'a}ny et al. \cite{barany2014colourful}]\label{thm:CFHballs}
    For every $\rho\geq1$ and every dimension $d$ there exists a function $\beta: (0,1] \to (0,1)$ with the following property.    
    Let $\mathcal{F}_1, \ldots, \mathcal{F}_{k+2}$ be families of $\rho$-fat convex sets in $\mathbb{R}^d$. If at least $\alpha |\mathcal{F}_1|\cdots |\mathcal{F}_{k+2}|$ of the colorful selections have a $k$-transversal, then there exists an $i$ with $\mathcal{F}_i$ having a subfamily of size at least $\beta(\alpha)|\mathcal{F}_i|$ which has a $k$-transversal.
\end{theorem}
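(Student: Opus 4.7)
The plan is to reduce the $k$-transversal problem for $\rho$-fat convex sets to a point-transversal problem in the orthogonal complement of a chosen $k$-dimensional direction. Using $\rho$-fatness, I first sandwich each $K \in \bigcup_i \mathcal{F}_i$ between concentric balls $B(x_K, r_K) \subseteq K \subseteq B(x_K, R_K)$ with $R_K \leq \rho r_K$. Then a $k$-flat $L$ hits $K$ whenever $\dist(L, x_K) \leq r_K$, and hits $K$ only if $\dist(L, x_K) \leq R_K$, so up to a multiplicative factor depending only on $\rho$, the transversal problem is equivalent to hitting balls centered at the $x_K$'s.

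Next I would discretize the Grassmannian $G(k,d)$ of $k$-dimensional linear subspaces by an $\epsilon$-net $\mathcal{N}$, where $\epsilon$ is chosen small depending on $\rho, d, k$. For each colorful $(k+2)$-tuple with a $k$-transversal $L$, classify it by the nearest $V \in \mathcal{N}$ to the linear direction of $L$. Pigeonhole on $\mathcal{N}$ yields one $V$ accounting for at least an $\alpha/|\mathcal{N}|$ fraction of the good colorful tuples. Fatness then lets me trade the direction discrepancy for a controlled enlargement: rotating $L$ to have direction exactly $V$ yields a $k$-flat that still meets the corresponding sets after enlarging them by a bounded factor depending only on $\epsilon, \rho, d, k$.

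I then project everything onto $W = V^\perp \cong \R^{d-k}$. The projections $\p_W(K)$ form families of $\rho$-fat convex sets in $W$, and $k$-flats of direction exactly $V$ correspond to points of $W$; hence common transversals of direction $V$ become common points of the projected sets. The plan is to apply the colorful fractional Helly theorem for point-transversals in $\R^{d-k}$ due to B{\'a}r{\'a}ny et al.\ \cite{barany2014colourful} to the projected families, obtaining a point hit by a constant fraction of the sets in some $\p_W(\mathcal{F}_i)$; the preimage of this point under $\p_W$ is the desired $k$-flat hitting that fraction of $\mathcal{F}_i$.

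The main obstacle is that the colorful fractional Helly theorem of B{\'a}r{\'a}ny et al.\ applies to $d-k+1$ colors in $\R^{d-k}$, whereas we have $k+2$ colors. When $k+2 \geq d-k+1$, one can select $d-k+1$ of the colors and absorb the extras by an averaging argument. When $k+2 < d-k+1$, the colors are too few; here one must exploit that the projected sets are essentially balls, for which a colorful fractional Helly with fewer colors should hold, or refine the reduction by further partitioning one family to manufacture additional colors. Reconciling these two regimes uniformly, and tracking the quantitative dependence of $\beta$ on $\alpha, \rho, d, k$ through the discretization parameter $\epsilon$, is the most delicate part of the proof.
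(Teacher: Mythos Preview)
The discretization step contains a genuine gap. You claim that rotating a $k$-flat $L$ by angle at most $\epsilon$ to have direction exactly $V$ yields a $k$-flat that still meets the sets after enlarging them by a factor depending only on $\epsilon,\rho,d,k$. This is false: fatness controls the shape of each individual set, but says nothing about the \emph{scale} of the configuration. Take two unit balls in $\R^2$ at distance $N$; they have a line transversal, but any line obtained by rotating it by a fixed angle $\epsilon>0$ misses one of the balls by roughly $\epsilon N$, which can be made arbitrarily large compared to the radii. No enlargement factor independent of the family fixes this. Since $\rho$-fat families may contain sets of wildly different sizes and mutual distances, an $\epsilon$-net on the Grassmannian cannot capture the direction of a transversal up to bounded enlargement.

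The paper's proof avoids this by never fixing a direction. Instead it charges each good colorful $(k+2)$-tuple to its member $K_0$ of \emph{smallest} outer radius $R_{K_0}$, so that every other set in the tuple has outer radius at least $R_{K_0}$. One then \emph{translates} (not rotates) the transversal by at most $R_{K_0}$ to pass through the center of $K_0$; this is absorbed by Minkowski-enlarging the other sets by $R_{K_0}$, which preserves $\rho$-fatness precisely because they are all at least as large as $K_0$. Now all relevant transversals are $k$-dimensional \emph{linear} subspaces, and central projection to $\S^{d-1}$ turns them into great $(k-1)$-spheres, reducing to a spherical $(k-1)$-transversal problem with $k+1$ colors. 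Induction on $k$ (proved simultaneously for $\R^d$ and $\S^d$) finishes the argument; the base case $k=0$ is a direct volumetric pigeonhole using Claim~\ref{cl:fatproperty}, not the Bárány et al.\ theorem. This also dissolves your color-count difficulty: at each inductive step both $k$ and the number of colors drop by one, so the mismatch you identify between $k+2$ colors and the Helly number $d-k+1$ in $\R^{d-k}$ never arises.
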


Our inductive proof of Theorem~\ref{thm:CFHballs} also requires a spherical analog, for which we need the following definitions. A cap of the sphere $\S^d$ is the intersection of $\S^d$ with a ball in $\mathbb{R}^{d+1}$ (we can assume that the center of the ball is from $\S^d$), a great $k$-sphere is the intersection of $\S^d$ with a $(k+1)$-dimensional linear subspace of $\mathbb{R}^{d+1}$ (assuming that the origin is the center of $\S^d$), and a spherical $k$-transversal for a spherical family is a great $k$-sphere intersecting all members of the family. We can  describe caps as subsets of $\S^d$ of the form $B(x,\varepsilon) = \{y \in \S^d: \angle(x,y) = \cos^{-1}(xy) \leq \varepsilon\}$ where $x \in \S^d$. 
Beware that the definition of $B(x,r)$ implicitly depends on the space we are working in!
We intentionally do not use a different notation so that our argument can be described more generally, but it will be always clear from the context in which space we are in.
A $\rho$-fat family of spherical sets can be defined analogously to the Euclidean case, with the existence of caps $B(x_K, r_K)$ and  $B(x_K,R_K)$ for every $K \in \mathcal{F}$ with the property that  $B(x_K, r_K) \subset K \subset B(x_K,R_K)$ and $R_K \leq \rho r_K$.

\begin{theorem}\label{thm:CFHcaps}
    For every $\rho \geq 1$ and every dimension $d$ there exists a function $\beta: (0,1] \to (0,1)$ with the following property.    
    Let $\mathcal{F}_1, \ldots, \mathcal{F}_{k+2}$ be families of $\rho$-fat convex sets in $\S^d$. If at least $\alpha |\mathcal{F}_1|\cdots |\mathcal{F}_{k+2}|$ of the colorful selections have a spherical $k$-transversal, then there exists an $i$ with $\mathcal{F}_i$ having a subfamily of size at least $\beta(\alpha)|\mathcal{F}_i|$ which has a spherical $k$-transversal.
\end{theorem}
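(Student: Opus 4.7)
I would prove Theorem~\ref{thm:CFHcaps} by induction on $d$, coupled with the Euclidean colorful version Theorem~\ref{thm:CFHballs}: the spherical statement in $\mathbb{S}^d$ reduces via gnomonic projection to the Euclidean statement in $\mathbb{R}^d$, and in turn the Euclidean statement in $\mathbb{R}^d$ reduces, by projecting onto the direction subspace of the transversal $k$-flat, to the spherical statement in $\mathbb{S}^{d-1}$ with transversal dimension $k-1$. The base case is $k=0$, which after reduction to a single small cap is precisely the colorful fractional Helly theorem of Bárány et al.~\cite{barany2014colourful}.

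For the spherical-to-Euclidean step, I would apply a double pigeonhole to the hypothesis. Partition each $\mathcal{F}_i$ into buckets by the scale $r_K$ (dyadic intervals of the inner radius) and by the location of its inscribed cap (one of $O_\rho(1)$ fixed caps of $\mathbb{S}^d$ covering the sphere at that scale). Iterating this pigeonhole across the $k+2$ color classes retains a fraction $\Omega_{d,\rho}(\alpha)$ of colorful good $(k+2)$-tuples all of whose members of each color $i$ lie in a single small cap $C_i$ and have inner radius within a factor of~$2$. Within such a cap, gnomonic projection sends great $k$-spheres bijectively to $k$-flats and distorts $\rho$-fatness by only a constant factor depending on $\rho$ and the angular radius of $C_i$. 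This reduces our instance to the hypothesis of Theorem~\ref{thm:CFHballs} in $\mathbb{R}^d$ for a $\rho'$-fat family with $\rho' = O(\rho)$, and the resulting Euclidean $k$-transversal lifts back to a spherical $k$-transversal of the original subfamily of some $\mathcal{F}_i$.

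The main technical obstacle is to quantitatively track $\rho$-distortion and the loss in colorful density through each pigeonhole and each projection, while keeping the resulting $\beta$ dependent only on $d$, $\rho$, and $\alpha$. In the companion Euclidean-to-spherical step one further uses that a $k$-flat $L$ hits a ball $B(p, r)$ iff the projection of $p-a$ (for $a \in L$) onto $L^\perp$ has Euclidean norm at most~$r$, so that fixing the \emph{direction} of $L$ (a point of the Grassmannian $G(k, d) \cong \mathbb{S}^{d-1}/\cdots$) turns the problem into a point-transversal question in the $(d-k)$-dimensional orthogonal complement. The subtlety is that this direction varies across different good tuples; handling it uniformly requires either averaging over an $\varepsilon$-net on the Grassmannian (losing a factor depending only on~$d$) or an extra bucketing step. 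I expect the resulting $\beta(\alpha)$ to be polynomial in $\alpha$ of degree depending on $d$ and $\rho$, arising from iterating these reductions from $\mathbb{S}^d$ down through the double induction.
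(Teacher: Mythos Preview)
Your inductive shape is right and the $k=0$ base case is fine, but both reduction steps, as you describe them, have a real gap: the bucketing and $\varepsilon$-net arguments produce an \emph{unbounded} number of classes, so pigeonholing over them does not leave a fraction depending only on $d,\rho,\alpha$. In the spherical-to-Euclidean step, dyadic bucketing by $r_K$ already gives infinitely many scales (radii can be arbitrarily small), and at scale $r$ you need $\Theta(r^{-d})$ caps, not $O_\rho(1)$, to cover $\mathbb{S}^d$; moreover, even if you restrict each $\mathcal{F}_i$ to one cap, the $k+2$ caps (one per color) need not fit in a common open hemisphere, so a single gnomonic chart need not exist. In the Euclidean-to-spherical step, rounding the direction of a transversal $k$-flat by angle $\varepsilon$ moves a contact point at distance $D$ by about $D\varepsilon$, and since the sets in a good tuple can have arbitrary sizes and separations, no $\varepsilon$ depending only on $d,\rho$ keeps all $k+2$ sets hit after rounding; ``an extra bucketing step'' runs into the same unboundedness.

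The paper closes exactly this hole with one idea your sketch is missing: charge each good colorful $(k+2)$-tuple to its member with the \emph{smallest} outer radius $R_K$, and pass to the most-charged set $K_0$. In every tuple charged to $K_0$, all other members satisfy $R_K\ge R_{K_0}$, so enlarging each of them by $R_{K_0}$ (Minkowski sum in $\mathbb{R}^d$, convex hull of the $\varepsilon$-neighborhood in $\mathbb{S}^d$) keeps them $\rho$-fat and lets one replace $K_0$ by its center; the transversal can now be taken through that single point. Central projection onto $\mathbb{S}^{d-1}$ (or onto $\partial B(x_{K_0},\varepsilon)$ in the spherical case) then turns $k$-flats through a point into great $(k-1)$-spheres, yielding a clean reduction $(\mathbb{S}^d,k)\to(\mathbb{S}^{d-1},k-1)$ with no Grassmannian netting and no scale bucketing. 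In particular, the spherical induction is self-contained; the paper never passes through the Euclidean statement to prove Theorem~\ref{thm:CFHcaps}.
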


Notice that in fact Theorem~\ref{thm:CFHcaps} implies Theorem~\ref{thm:CFHballs} as any counterexample in $\R^d$ would also give a counterexample on the surface of a large enough sphere $\S^d$ with small (compared to the radius of the sphere) $\rho$-fat sets on it.
However, we prove both Theorems~\ref{thm:CFHballs} and \ref{thm:CFHcaps} as we believe that our argument is easier to understand in the more natural setting of $\R^d$, and then think it through that it also works in $\S^d$ without much change.
We mark the differences in our simultaneous proof for both theorems with brackets [~].

The following is a key property of $\rho$-fat convex sets.

\begin{claim}\label{cl:fatproperty}
    For all $d$ and $\rho$ we have a $\gamma > 0$ such that the following holds. If $K$ is a $\rho$-fat convex set in $\Red$ [or in $\Sd$], $y \in K$ and $t \leq R_K$, then $\vol(K \cap B(y,t)) \geq \gamma\vol(B(y,t))$, where $B(y,t)$ denotes the ball [or spherical cap] of radius $t$ centered at $y$. (See Figure~\ref{fig:fat-prop}.)
\end{claim}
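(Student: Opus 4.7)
The plan is to exploit convexity: since both $y$ and the inner ball $B(x_K, r_K)$ lie in $K$, the whole convex hull $\conv(\{y\} \cup B(x_K, r_K))$ sits inside $K$, and inside it one can explicitly locate a Euclidean ball of radius comparable to $t$ that also fits inside $B(y,t)$. Concretely, for every $\lambda \in [0,1]$ the set $C_\lambda := (1-\lambda)y + \lambda B(x_K, r_K) = B\bigl((1-\lambda)y + \lambda x_K,\, \lambda r_K\bigr)$ is a Euclidean ball of radius $\lambda r_K$ contained in $K$. The whole proof is then to pick $\lambda$ so that $C_\lambda \subseteq B(y,t)$ while $\lambda r_K$ is within a constant factor of $t$.

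Writing $\delta := \|y - x_K\| \leq R_K$, the farthest point of $C_\lambda$ from $y$ sits at distance $\lambda(\delta + r_K)$, so $C_\lambda \subseteq B(y,t)$ iff $\lambda \leq t/(\delta + r_K)$. If $t \leq \delta + r_K$, I would take $\lambda := t/(\delta + r_K)$, producing a ball of radius $tr_K/(\delta + r_K) \geq t/(1+\rho)$ inside $K \cap B(y,t)$ (using $\delta \leq R_K \leq \rho r_K$); if instead $t > \delta + r_K$, I would take $\lambda := 1$, so that all of $B(x_K, r_K)$ lies in $K \cap B(y,t)$, and $r_K \geq R_K/\rho \geq t/\rho \geq t/(1+\rho)$. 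In both cases $\vol(K \cap B(y,t)) \geq (1+\rho)^{-d}\vol(B(y,t))$, so $\gamma := (1+\rho)^{-d}$ finishes the Euclidean case.

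For the spherical case in $\mathbb{S}^d$ I would run the identical scheme with geodesic segments replacing straight ones: the spherical convex hull of $y$ and the cap $B(x_K, r_K)$ still lies in $K$, and the subset obtained by truncating each geodesic at fraction $\lambda$ has ``radius'' about $\lambda r_K$ in the same sense as before. The main obstacle I expect is volume bookkeeping: a spherical cap of radius $s$ has volume proportional to $\int_0^s \sin^{d-1}\tau\,d\tau$ rather than $s^d$, so the clean Euclidean scaling must be replaced by a comparison of this integral at radii $\lambda r_K$ and $t$. Since all relevant radii lie in $[0, \rho r_K]$, a range on which $\sin$ distorts lengths by a bounded factor depending only on $d$ and $\rho$, the loss is only a further multiplicative constant which I absorb into $\gamma$; making the geodesic-fraction region genuinely cap-like (rather than a slightly asymmetric lens) is the only step that requires a little care.
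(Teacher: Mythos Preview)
Your argument is correct and follows essentially the same approach as the paper: both use that $\conv(\{y\}\cup B(x_K,r_K))\subseteq K$ by convexity and then show this cone occupies a fixed fraction of $B(y,t)$. The paper finishes with an angular-sector argument (all points of $B(y,\min(r_K,t))$ making angle at most $\arcsin(1/\rho)$ with $x_K-y$ lie in the cone), while you instead locate the scaled ball $C_\lambda$ inside the cone, which has the pleasant side effect of yielding the explicit constant $\gamma=(1+\rho)^{-d}$; the spherical case is handled at the same level of rigor in both.
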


\begin{figure}
    \centering
\begin{tikzpicture}[line cap=round,line join=round,x=1.0cm,y=1.0cm]
\clip(7.08,2.46) rectangle (13.16,7.75);
\draw(10.2,5.1) circle (0.39cm);
\draw(10.2,5.1) circle (2.6cm);
\draw(8.1,5.1) circle (0.72cm);
\draw (8.1,5.1)-- (10.1,5.48);
\draw (8.1,5.1)-- (10.09,4.72);
\draw (8.1,5.1)-- (8.5,4.5);
\draw (10.2,5.1)-- (10.35,4.74);
\draw (10.2,5.1)-- (12.4,3.72);
\draw (9.25,5.34) node[anchor=north west] {C};
\draw (7.7, 5.1) -- (9.1, 6.6) -- (12.1, 5.1) -- (9.1, 3.6) -- cycle;
\begin{scriptsize}
\fill [color=black] (10.2,5.1) circle (1.5pt);
\draw[color=black] (10.32,5.22) node {$x_K$};
\fill [color=black] (8.1,5.1) circle (1.5pt);
\draw[color=black] (8.18,5.32) node {$y$};
\draw[color=black] (8.45,4.75) node {$t$};
\draw[color=black] (10.12,4.90) node {$r_K$};
\draw[color=black] (11.54,3.86) node {$R_K$};
\draw[color=black] (9.75,6.00) node {$K$};
\end{scriptsize}
\end{tikzpicture}
\caption{A property of fat convex sets}
    \label{fig:fat-prop}
\end{figure}    

\begin{proof}
    Let $B(x_K, r_K) \subset K \subset B(x_K, R_K)$ with $R_K \leq \rho r_K$, and define $r_K'=\min(r_K,t)\ge t/\rho$ and $C = \operatorname{conv}(\{y\} \cup B(x_K, r_K)) \subset K$.
    $C$ contains all the points $u$ of $B(y,r_K')$ such that $\angle (u-y, x_K-y)$ is at most some number depending only on $\rho$, so $C$ contains a constant fraction of $B(y,t)$.
\end{proof}

\begin{proof}[Proof of Theorems~\ref{thm:CFHballs} and \ref{thm:CFHcaps}]
The proof is by induction on $k$ and $d$. 
Let $n_i = |\mathcal{F}_i|$. Let $\mathcal{H}$ be the $(k+2)$-uniform hypergraph with vertex set $\mathcal{F} = \cup \mathcal{F}_i$, where the edges are those colorful $(k+2)$-tuples which have a [spherical] $k$-transversal. Charge every hyperedge to its member $K$ with the smallest bounding radius $R_K$. In case of ties, charge arbitrarily to one of the members $K$ with smallest $R_K$.
    
Since sets are charged $\alpha n_1 \cdots n_{k+2}$ times, there is a color class $\mathcal{F}_i$ whose members are charged $\frac{\alpha}{k+2} n_1 \cdots n_{k+2}$ times. Without loss of generality, we may assume that this color class is $\mathcal{F}_{k+2}$. Pick the set $K_0$ from $\mathcal{F}_{k+2}$ with the most charge. By averaging, $K_0$ was charged at least $\frac{\alpha}{k+2}n_1\cdots n_{k+1}$ times.
    
If $k=0$, then we have $\frac\alpha2 n_1$ sets from $\mathcal{F}_1$ intersecting $K_0\in \mathcal{F}_2$. We have a $y_K \in B(x_{K_0}, R_{K_0}) \cap K$ for all such $K\in \mathcal{F}_1$. As $K$ contains a positive fraction of the ball $B(y_K, R_{K_0})$ by Claim~\ref{cl:fatproperty}, and $B(y_K, R_{K_0})$ occupies a positive fraction of $B(x_{K_0}, 2R_{K_0})$, we have that every $K$ contains a positive fraction of $B(x_{K_0}, 2R_{K_0})$. In this case, a constant fraction of the $\frac\alpha2 n_1$ sets can be hit by a single point by the volumetric pigeonhole principle (where the constant depends only on the dimension and $\rho$).

If $k > 0$, we can reduce finding a [spherical] $k$-transversal in $\R^d$ [in $\S^d$] to finding a spherical $(k-1)$-transversal in $\S^{d-1}$ as follows.
\smallskip

\textbf{Euclidean case:} If we are in the Euclidean space $\mathbb{R}^d$, we may assume that $B_0 = B(x_{K_0}, R_{K_0})$ is the unit ball centered at the origin. Let $K' = K + B_0$ (where $+$ denote the Minkowski sum) if $B \in\cup \mathcal{F}_i\setminus\{K_0\}$, and let $K_0'$ be the degenerate ball containing only the origin. Denote the new families by $\mathcal{F}' = \cup \mathcal{F}_i'$.

\begin{claim}
    If $k+2$ sets from $\mathcal{F}$ have a $k$-transversal, then the corresponding $k+2$ balls in $\mathcal{F}'$ have a $k$-transversal.
\end{claim}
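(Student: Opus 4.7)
The plan is to split on whether $K_0$ appears among the $k+2$ selected sets. If $K_0$ is not among them, then for each selected $K$ we have $K' = K + B_0 \supseteq K$ since $B_0$ contains the origin, so the given $k$-transversal $L$ of the original tuple is automatically a $k$-transversal of the inflated tuple.

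The interesting case is when $K_0$ is one of the $k+2$ sets, say the tuple is $K_0, K_1, \dots, K_{k+1}$, and $L$ is a $k$-flat meeting each of them. Since $K_0 \subseteq B_0$, I would pick a point $p \in L \cap B_0$ and translate: let $L' = L - p$, a $k$-flat parallel to $L$ passing through the origin, so $L'$ hits $K_0' = \{0\}$ automatically. For each $i \geq 1$, pick $q_i \in L \cap K_i$; then $q_i - p$ lies in $L' \cap (K_i + \{-p\})$. The crucial observation is that $B_0$ is a ball centered at the origin, so $-p \in B_0$, which gives $K_i + \{-p\} \subseteq K_i + B_0 = K_i'$. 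Hence $L' \cap K_i' \neq \emptyset$ for each $i$, as required.

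There is essentially no obstacle to overcome: the argument is a one-line translation trick made possible by the normalization that places $B_0$ at the origin with unit radius. What makes the normalization work in the first place is the translation-invariance of the family of $k$-flats together with the central symmetry $B_0 = -B_0$ of a Euclidean ball centered at the origin—so shifting the flat by any vector that lands in $B_0$ costs nothing when we have already inflated the other sets by $B_0$.
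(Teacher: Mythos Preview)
Your proof is correct and follows essentially the same approach as the paper: split on whether $K_0$ occurs in the tuple, and in the interesting case translate the transversal by a vector in $B_0$ so that it passes through the origin, using $-p\in B_0$ (equivalently, that the translation has length at most $R_{K_0}$) to ensure the inflated sets $K_i'=K_i+B_0$ are still hit. Your write-up is simply a more explicit version of the paper's two-line argument.
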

\begin{proof}
    If none of the sets is $K_0$, the same $k$-transversal works.
    Otherwise, translate the hitting $k$-flat to the origin.
    As the translation is by at most $R_{K_0}$, if any $K$ was hit by the original $k$-flat, then $K'$ is hit by the translated $k$-flat.
\end{proof}

Now, centrally project every set in $\mathcal{F}'\setminus \{K_0'\}$ to the surface of the unit sphere from the origin. (If a set contains the origin, its projection is the entire sphere.)

\begin{claim}
    A projection of a $\rho$-fat convex set is $\frac\pi2\rho$-fat.
\end{claim}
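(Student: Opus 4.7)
The plan is to find, for a $\rho$-fat convex set $L\subset\R^d$ with $B(c,r)\subseteq L\subseteq B(c,R)$ and $R\le\rho r$, two concentric spherical caps sandwiching $\proj(L)$ with angular-radius ratio at most $\tfrac{\pi}{2}\rho$. If $0\in L$ there is nothing to show since $\proj(L)=\S^{d-1}$, so I assume $0\notin L$, which gives $D:=|c|>r$, and I take $\hat c:=c/|c|$ as the common centre.

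Since $B(c,r)\subseteq L$ avoids the origin, its central projection is exactly the spherical cap $B(\hat c,\arcsin(r/D))$, and this cap lies inside $\proj(L)$. For the bounding cap I would split into two sub-cases. In the far case $D\ge R$, the outer ball $B(c,R)$ also misses the origin and projects to $B(\hat c,\arcsin(R/D))\supseteq\proj(L)$; combined with the elementary estimates $x\le\arcsin(x)\le\tfrac{\pi}{2}x$ on $[0,1]$ this yields the ratio bound $\tfrac{\pi}{2}(R/r)\le\tfrac{\pi}{2}\rho$, with both caps already concentric at $\hat c$. In the close case $D<R$ we have $r<D<R$ and hence $r/D\ge 1/\rho$, but the outer Euclidean ball swallows the origin and cannot be projected directly. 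Here I would instead bound the outer angular radius at $\hat c$ by a convex-hull contradiction: if some $y\in L$ had $\phi:=\angle(\hat c,\hat y)>\pi/2$ with $\sin\phi\le r/D$, then the line through the origin in the direction $\hat y$ would be at distance $D\sin\phi\le r$ from $c$ and hence intersect $B(c,r)\subseteq L$ at a point $z$; because $\cos\phi<0$ this $z$ lies on the opposite ray from $y$, so the segment $yz\subseteq L$ passes through $0$, contradicting $0\notin L$. Therefore $\phi<\pi-\arcsin(r/D)$, and combining this upper bound with $\arcsin(r/D)\ge r/D\ge 1/\rho$ on the inscribed radius yields the required ratio, which can be made to land inside $\tfrac{\pi}{2}\rho$ after tightening the $\arcsin$ estimates over the relevant range of $r/D$.

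The main obstacle is the close case, where one has to simultaneously keep the bounding cap concentric with the inscribed one at $\hat c$ — rather than accepting the natural hemisphere around a separating-hyperplane normal — and squeeze the numerical constant down to exactly $\tfrac{\pi}{2}\rho$. The convex-hull-through-origin argument above handles the concentricity, and a careful combination of the $\arcsin$ inequalities handles the constant; the crude form already gives $O(\rho)$-fatness, which is all that matters for the inductive spherical fractional Helly argument that uses this claim.
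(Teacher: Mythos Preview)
Your far case ($D\ge R$) is exactly the paper's argument: both concentric Euclidean balls project to concentric spherical caps of angular radii $\arcsin(r/D)$ and $\arcsin(R/D)$, and the inequalities $x\le\arcsin x\le\tfrac{\pi}{2}x$ give the ratio $\le\tfrac{\pi}{2}\rho$.

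In the close case your route diverges from the paper's. The paper simply observes that a convex set missing the origin lies in an open half-space, hence its projection lies in a hemisphere (a cap of angular radius $\pi/2$); pairing this with the inner cap of radius $\ge 1/\rho$ yields $\tfrac{\pi}{2}\rho$ in one line---though the paper does not dwell on the fact that this hemisphere need not be centred at $\hat c$. Your convex-hull-through-origin argument is more honest on this point: it genuinely produces an outer cap \emph{centred at $\hat c$} of radius at most $\pi-\arcsin(r/D)$. The trade-off is that your bound is looser, and your claim that it can be ``tightened'' to land inside $\tfrac{\pi}{2}\rho$ is not correct. The ratio you obtain is $\pi/\arcsin(r/D)-1$; since $r/D$ can be as small as $r/R\ge 1/\rho$ (take $D$ just below $R$), this ratio can be as large as roughly $\pi\rho-1$, which exceeds $\tfrac{\pi}{2}\rho$ for every $\rho\ge 1$. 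No refinement of the $\arcsin$ estimates alone rescues this, because the slack is in your outer-radius bound itself: when $D$ is just below $R$ the projection of $L$ is close to a hemisphere, not close to the whole sphere minus a small antipodal cap, so $\pi-\arcsin(r/D)$ overshoots the true angular radius about $\hat c$ by nearly a factor of two.

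You are, however, entirely right in your last sentence: the induction feeding into Theorem~\ref{thm:CFHcaps} only needs $\rho'=O_\rho(1)$, and your argument delivers a clean $(\pi\rho-1)$-fat bound with concentric caps, which is all that is needed downstream.
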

\begin{proof}
    The projection of a ball $B(x,r)$ to the unit sphere centered at the origin is a cap of the unit sphere with radius $\sin^{-1}(r/t)$ if $x$ is at distance $t \geq r$ of the origin. Thus, the projection of a $\rho$-convex set $K$ is $\frac{\sin^{-1}(R_K/t)}{\sin^{-1}(r_K/t)}$-fat if $x_K$ is at distance $t > R_K$ from the origin. But $t \leq \sin^{-1} (t) \leq \frac{\pi}{2}t$, thus $\frac{\sin^{-1}(R_K/t)}{\sin^{-1}(r_K/t)} \leq \frac{\pi}{2}\frac{ R_k}{ r_k }= \frac\pi2 \rho$ and the image of $K$ is $\frac\pi2\rho$-fat.

    If $t \leq R_K$, then the projection of $B(x_K, r_K)$ is a cap with radius $\sin^{-1}(r_K/t) \geq \frac{r_K}{t}\geq \frac{1}{ \rho} $. Either the projection of $K$ is contained in a cap of radius $\pi/2$ and contains a cap of radius $\frac{1}{\rho}$, or the image of $K$ is the complete unit sphere. In both cases the projection is $\frac\pi2 \rho$-fat. 
\end{proof}
Denote the obtained family of $\rho' = \frac\pi2\rho $-fat spherically convex sets by $\mathcal{F}^*$. As $\frac{\alpha}{k+2}n_1\cdots n_{k+1}$ of the colorful selections of $\mathcal{F}_1', \ldots, \mathcal{F}_{k+1}'$ have a $k$-dimensional \emph{linear} subspace hitting them, we have that $\frac{\alpha}{k+2}n_1\cdots n_{k+1}$ of the colorful selections of $\mathcal{F}_1^*, \ldots, \mathcal{F}_{k+1}^*$ have a great $(k-1)$-sphere intersecting them.
The assumptions of the spherical colorful fractional Helly Theorem~\ref{thm:CFHcaps} on $\S^{d-1}$ with great $(k-1)$-spheres are satisfied, thus we have an $i \in [k+1]$ and a subfamily $\mathcal{G}^* \subset \mathcal{F}_i^*$ of size at least $\beta^* |\mathcal{F}_i^*|$ which can be hit by a great $(k-1)$-sphere, where $\beta^*$ is the value we obtain from Theorem~\ref{thm:CFHcaps} with parameters $d-1,k-1$ and $\frac{\alpha}{k+2}$.
This means that the corresponding family of preimages $\mathcal{G}' = \{K' \in \mathcal{F}': \proj_{\mathbb{S}^{d-1}} K' \in \mathcal{G}^*\}$ can be hit by a $k$-dimensional linear subspace $F$.
Let $\mathcal{G}=\{K\in \mathcal F: K'\in \mathcal{G}'\}$.
If we project $K_0$ and every set $K\in \mathcal{G}$ to the orthogonal complement $F^\perp$ of $F$, every projected set $\proj_{F^\perp} K$ will intersect the set $\proj_{F^\perp} K_0$.

Since for all $K$ there exists a $y_K \in B(0,1) \cap \proj_{F^\perp} K$, and $\proj_{F^\perp} K$ contains a ball of $F^\perp$ of radius $\rho$, we know that $K$ occupies a constant fraction of $B(y_K, 1) \cap F^\perp \subset B(0,2) \cap F^\perp$ by Claim~\ref{cl:fatproperty} and thus a positive fraction of $B(0,2)\cap F^\perp$. By the pigeonhole principle we can find a point $x\in F^\perp$ in the intersection of a constant fraction of the projections $\{\proj_{F^\perp} K: K \in \mathcal{G}'\}$.
$F+x$ (the translation of $F$ by $x$) is a $k$-flat intersecting a constant fraction of the fat convex sets in $\mathcal{G}'$. 
\smallskip

\textbf{Spherical case:}
For this case, we need some claims about the geometry of spherical caps and great $k$-spheres which are described in Section~\ref{sec:prelim}, but otherwise the main argument is the same as in the Euclidean case.
First, we deal with the case where there are a lot of large sets in $\cup_i\mathcal{F}_i$. Note that we may assume that all the $n_i$s are large enough, otherwise a $\beta$-fraction of a color class will consists of a single set $K$ (if $\beta$ is chosen to be small enough). Let $c < \alpha/(k+2)$ be a small constant. If there is an $\mathcal{F}_i$ such that for at least $cn_i$ of the sets $K \in \mathcal{F}_i$ we have $R_K > \pi/8$, then a constant fraction of them can be hit with a single point by the volumetric pigeonhole principle, as all of them contain caps with radius at least $\rho \pi/8$. This constant fraction depends only on $d$ and $\rho$. If we do not have this many large sets in any of the $\mathcal{F}_i$, we can simply delete the large ones, as this way we loose at most $c(k+2)n_1\ldots n_{k+2}$ out of the original $\alpha n_1 \ldots n_{k+2}$ colorful $(k+2)$-tuples with a spherical $k$-transversal. Thus, we can have families of size at least $(1-c)n_i$ with at least an $(\alpha- c(k+2))n_1 \ldots n_{k+2}$ colorful $(k+2)$-tuples having a spherical $k$-transversal and no set $K$ in the families with $R_K > \pi/8$.

Let $K_0$ be the most charged set as defined above, let $\varepsilon = R_{K_0}$ and $B_0 = B(x_{K_0},\varepsilon)$. For a set $K \subset \S^d$, its $\varepsilon$-neighborhood is $K^\varepsilon = \{u \in \S^d: \exists v \in K \text{ with } \angle(u,v) \leq \varepsilon\}$, and $\operatorname{conv}(K)$ is the intersection of all the spherically convex sets containing $K$. If $K$ is a $\rho$-fat convex set, then $\conv(K^\varepsilon)$ is a $\rho$-fat convex set by Claim~\ref{cl:sphericalNeighborhood}. For every $K \in \cup \mathcal{F}_i \setminus \{K_0\}$, let $K' = \conv(K^\varepsilon)$ and let $K_0' = \{x_{K_0}\}$. Denote the new families of spherically convex $\rho$-fat sets by $\mathcal{F}' = \cup \mathcal{F}_i'$. By Claim~\ref{cl:hitLargerCaps}, if some sets in $\mathcal{F}$ can be hit by a great $k$-sphere, then the corresponding sets in $\mathcal{F}'$ can be hit by a great $k$-sphere as well. Now let $\mathcal{F}^* = \proj_{B_0} \mathcal{F}'$, where $\proj_{B_0}$ denotes the projection into the boundary $\partial B_0$. By Claim~\ref{cl:capProj}, $\mathcal{F}^*$ is a family of spherically convex $\rho' = \frac{\pi^3}{4}\rho^2$-fat sets of the $(d-1)$-dimensional sphere $\partial B_0$. Without loss of generality, assume that $K_0 \in \F_{k+2}$. As $K_0$ is the most charged set, at least a $\frac{\alpha}{k+2}$-fraction of the colorful selections of $\F_1', \ldots, \F_{k+1}'$ have a spherical $k$-transversal containing $x_0$. These great $k$-spheres of $\S^d$ become great $(k-1)$-spheres of $\partial B_0$ after the projection. Thus, at least a $\frac{\alpha}{k+2}$-fraction of the colorful selections of $\F_1^*, \ldots, \F_{k+1}^*$ have a $(k-1)$-transversal. The assumptions of the spherical colorful fractional Helly Theorem~\ref{thm:CFHcaps} on $\partial B_0$ with spherical $(k-1)$-transversals are satisfied, thus we have an $i$ and a subfamily $\mathcal{G}^* \subset \mathcal{F}_i^*$ of size at least $\beta^* |\mathcal{F}_i^*|$ which has a spherical $(k-1)$-transversal. This means that the corresponding family of preimage sets $\mathcal{G}'$ has a spherical $k$-transversal. By Claim~\ref{cl:hitSmallerCaps}, a large subfamily of $\mathcal{G}$ has a spherical $k$-transversal as well.
\end{proof}

\section{Caps and projections on the sphere}\label{sec:prelim}

In this section, we prove the claims about spherical sets that were used in the proof of Theorem \ref{thm:CFHcaps}.
All the arguments are fairly straight-forward modifications of the simple proofs used in the Euclidean case, but more tedious calculations are required.

Recall that $B(v, \varepsilon)$ is the spherical cap centered at $v$ with angle $\varepsilon$, and a great $k$-sphere of $\S^d$ is the intersection of $\S^d$ with a $(k+1)$-dimensional linear subspace of $\mathbb{R}^{d+1}$ \cite{aronov2002helly}. A great $1$-sphere is called a great circle. A set $K \subset \S^d$ is convex, if either $K=\S^d$, or for no $v\in\S^d$ we have both $v,-v \in K$, and $K$ contains the shorter great circle arc connecting any two points from $K$.

If $u \notin \{v,-v\}$, then let $\rot(u,v)$ be the rotation $r$ with $r(u) = v$ which is constant on $\operatorname{span}(\{u,v\})^\perp$, so we rotate with the $uov$ angle $\angle (u,v)$ around the center $o$, keeping $\operatorname{span}(\{u,v\})^\perp$ fixed.
If $u \notin \{v,-v\}$, the projection of $u \in \S^d$ onto the boundary $\partial B(v, \varepsilon)$ of the cap $B(v, \varepsilon)$ is the intersection of $\partial B(v, \varepsilon)$ with the halfplane embedded in $\mathbb{R}^{d+1}$ containing $o$ and $v$ on its boundary, and $u$ in its interior.
In other words, if $u \not\in B(v, \varepsilon)$, then the projection is the point of $\partial B(v, \varepsilon)$ that is hit when we rotate $u$ to $v$.
It is denoted by $\proj_{B} (u)$, or simply $\proj (u)$ if $B$ is clear from the context. For any set $X$ and cap $B = B(v,\varepsilon)$, let $\proj_B X=\{\proj_B (u) : u\in X\}$ if $X \cap \{v,-v\}=\emptyset$, and let $\proj_B X = B$ if $X \cap \{v, -v\} \neq \emptyset$.

\begin{claim}\label{cl:sphericalNeighborhood}
  If $K \subseteq \mathbb{S}^d$ is $\rho$-fat, then $\conv(K^\varepsilon)$ is $\rho$-fat as well.
\end{claim}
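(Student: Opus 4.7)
The plan is to show that the witnesses to $\rho$-fatness of $K$ can be inflated by $\varepsilon$ to give witnesses for $\operatorname{conv}(K^\varepsilon)$. Suppose $B(x_K, r_K) \subseteq K \subseteq B(x_K, R_K)$ with $R_K \leq \rho r_K$, and consider the concentric caps $B(x_K, r_K + \varepsilon)$ and $B(x_K, R_K + \varepsilon)$ as candidate inner and outer caps for $\operatorname{conv}(K^\varepsilon)$.

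For the inner containment, I would observe that the spherical triangle inequality gives $B(x_K, r_K)^\varepsilon = B(x_K, r_K + \varepsilon)$, so
\[
B(x_K, r_K + \varepsilon) = B(x_K, r_K)^\varepsilon \subseteq K^\varepsilon \subseteq \operatorname{conv}(K^\varepsilon).
\]
For the outer containment, the same triangle inequality yields $K^\varepsilon \subseteq B(x_K, R_K)^\varepsilon = B(x_K, R_K + \varepsilon)$; since the ambient cap $B(x_K, R_K + \varepsilon)$ is spherically convex (in the regime we are working in, so that its radius stays below $\pi/2$), it must also contain $\operatorname{conv}(K^\varepsilon)$. Finally, the ratio bound for the new pair of radii follows because
\[
R_K + \varepsilon \leq \rho r_K + \rho \varepsilon = \rho(r_K + \varepsilon),
\]
using $R_K \leq \rho r_K$ and $\rho \geq 1$.

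The only subtle step is justifying that $B(x_K, R_K + \varepsilon)$ really is spherically convex, so that taking its convex hull does not enlarge it. In the setting of the main proof we are free to assume $R_K$ and $\varepsilon$ are small (the large sets were removed, and $\varepsilon = R_{K_0}$ was chosen to be the smallest bounding radius among the charged sets, so $R_K + \varepsilon$ stays well below $\pi/2$); I would note this as the implicit hypothesis. Apart from that, the argument is a direct spherical analog of the obvious Euclidean fact that Minkowski-summing a $\rho$-fat set with a ball of radius $\varepsilon$ preserves $\rho$-fatness, and requires no further calculation.
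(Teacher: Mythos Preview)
Your proof is correct and follows the same approach as the paper: both use the concentric caps $B(x_K, r_K+\varepsilon)$ and $B(x_K, R_K+\varepsilon)$ as witnesses and check the ratio $\frac{R_K+\varepsilon}{r_K+\varepsilon}\le\frac{R_K}{r_K}\le\rho$. The only difference is how the degenerate case is treated: the paper disposes of it by observing that if $\conv(K^\varepsilon)=\mathbb{S}^d$ then it is trivially $1$-fat, whereas you invoke the standing smallness assumptions on $R_K$ and $\varepsilon$ from the surrounding argument to ensure $B(x_K,R_K+\varepsilon)$ is genuinely convex; either justification is fine.
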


\begin{proof}
    If $\conv(K^\varepsilon) = \mathbb{S}^d$, then it is $\rho$-fat with $\rho = 1$. Otherwise, as $B(x_K, r_K + \varepsilon) \subseteq \conv(K^\varepsilon) \subseteq B(x_K, R_K + \varepsilon)$, the $\rho$-fatness of $K^\varepsilon$ follows from $\frac{R_K + \varepsilon}{r_K + \varepsilon} \leq \frac{R_K}{r_K} \leq \rho$.
\end{proof}

\begin{claim}\label{cl:hitLargerCaps}
    If the spherical sets $K_1, \ldots, K_n \subset \Sd$  can be pierced with a great $k$-sphere, and $K_1\subset B(v, \varepsilon)$, then $B(v,0), K_2^\varepsilon, \ldots, K_n^\varepsilon$ can also be pierced with a great $k$-sphere.
\end{claim}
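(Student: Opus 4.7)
The plan is to take the piercing great $k$-sphere $S$ for $K_1,\ldots,K_n$ and rotate it slightly so that it picks up the point $v$, using that $S$ must enter the small cap $B(v,\varepsilon)$ at some point $p\in S\cap K_1$ with $\angle(p,v)\le \varepsilon$. The rotation of choice is $\rot(p,v)$, which by the definition in Section~\ref{sec:prelim} has rotation angle exactly $\angle(p,v)\le \varepsilon$ and sends $p$ to $v$. Writing $S'=\rot(p,v)(S)$, this is again a great $k$-sphere, and since $p\in S$ we get $v\in S'$, i.e.\ $S'$ hits $B(v,0)=\{v\}$.

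For the remaining sets, first pick any $q_i\in S\cap K_i$ for $i\ge 2$. Then $\rot(p,v)(q_i)\in S'$, and the goal reduces to proving the geometric fact that a rotation of $\S^d$ with rotation angle $\theta$ displaces every point by at most $\theta$; applying this with $\theta=\angle(p,v)\le\varepsilon$ yields $\angle(\rot(p,v)(q_i),q_i)\le \varepsilon$, and hence $\rot(p,v)(q_i)\in K_i^\varepsilon$ by the definition of the $\varepsilon$-neighborhood. This shows $S'$ intersects every $K_i^\varepsilon$, completing the proof.

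The only step requiring a short computation is the displacement bound for the rotation. I would verify it by choosing coordinates in which $\rot(p,v)$ is the standard rotation by $\theta$ in the $(x_1,x_2)$-plane, fixing the orthogonal complement. For $q=(q_1,\ldots,q_{d+1})\in\S^d$ a direct calculation gives
\[
\langle q,\rot(p,v)(q)\rangle \;=\; (q_1^2+q_2^2)\cos\theta+(1-q_1^2-q_2^2) \;\ge\; \cos\theta,
\]
so $\angle(q,\rot(p,v)(q))\le \theta$, as needed. The main (mild) obstacle is just making sure this displacement bound is applied to $q_i$ rather than to $p$ itself: the rotation angle is controlled by $\angle(p,v)$, and it is this angle, not the distance from $q_i$ to $v$, that bounds the movement of $q_i$. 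Everything else is a direct consequence of $\rot$ being an isometry preserving great $k$-spheres.
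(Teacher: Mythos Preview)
Your proof is correct and follows essentially the same approach as the paper: find a point on the piercing great $k$-sphere at spherical distance at most $\varepsilon$ from $v$, apply the rotation sending that point to $v$, and use that such a rotation moves every point of $\S^d$ by at most $\varepsilon$. The only cosmetic difference is that the paper takes this point to be $v'=\proj_F(v)$ rather than a point $p\in S\cap K_1$; your choice works just as well, and you additionally spell out the displacement bound that the paper merely asserts.
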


\begin{proof}
    Take a great $k$-sphere $F$ intersecting all of $K_1, \ldots, K_n \subset \Sd$, let $v' = \proj_F(v)$  and $\rot = \rot(v', v)$.
    For any point $u \in \S^d$, the spherical distance between $u$ and $\rot(u)$ is at most $\varepsilon$.
    Hence, if $u \in K_i \cap F$, then $\rot(u) \in K_i^\varepsilon \cap \rot(F)$, thus the great $k$-sphere $\rot(F)$ intersects all of $B(v,0), K_2^\varepsilon, \ldots, K_n^\varepsilon$.
\end{proof}

We want to show that during a gnomonic projection the distance of two points close to the center of the projected image cannot be distorted too much.
This is probably a well-known statement, but we could not find it anywhere, so we include the simple calculation below. If we embed $\S^d$ into $\R^{d+1}$ as the unit sphere, the spherical distance of $x$ and $y$ becomes $\angle(x,y)$, while their Euclidean distance is $|x-y|$.

\begin{claim}
    Embed $\S^d$ into $\R^{d+1}$ as the unit sphere. If $u,v \in B(w,\pi/4) \subset \mathbb{S}^d$ and $p$ is the central projection from the origin to the supporting hyperplane of $\mathbb{S}^d$ at $w$, then we have $\angle(u,v) \leq |p(u)-p(v)| \leq 2 \angle(u,v)$.
\end{claim}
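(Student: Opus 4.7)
I would bound the two inequalities separately using length estimates for the two obvious curves connecting $u,v$ and $p(u),p(v)$. Identify the supporting hyperplane at $w$ with $H = \{y \in \R^{d+1} : \langle y, w\rangle = 1\}$, so $p(u) = u/\langle u,w\rangle$. Note $\langle u,w\rangle = \cos\angle(u,w) > 0$ since $u \in B(w,\pi/4)$, and every $y \in H$ satisfies $|y| \geq 1$, because writing $y = w + y^\perp$ with $y^\perp \perp w$ gives $|y|^2 = 1 + |y^\perp|^2$.

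For the lower bound $\angle(u,v) \leq |p(u) - p(v)|$, I would take the straight segment $\ell(s) = (1{-}s)p(u) + s\,p(v)$ in $H$, project it back to $\Sd$ via $q(y) = y/|y|$, and observe that $q(p(u))=u$, $q(p(v))=v$, so $q\circ\ell$ is a curve on $\Sd$ from $u$ to $v$; hence its length is at least the geodesic distance $\angle(u,v)$. Differentiating $q\circ\ell$ and applying Cauchy--Schwarz yields
\[
|(q\circ\ell)'(s)|^2 = \frac{|\ell'(s)|^2 |\ell(s)|^2 - \langle \ell'(s),\ell(s)\rangle^2}{|\ell(s)|^4} \leq \frac{|p(u)-p(v)|^2}{|\ell(s)|^2} \leq |p(u)-p(v)|^2,
\]
using $|\ell(s)| \geq 1$. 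Integrating over $s \in [0,1]$ gives $\angle(u,v) \leq \operatorname{length}(q\circ\ell) \leq |p(u)-p(v)|$.

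For the upper bound $|p(u)-p(v)| \leq 2\angle(u,v)$, I would run the analogous argument in the opposite direction: take the unit-speed geodesic $\gamma\colon [0,\theta]\to\Sd$ from $u$ to $v$ (where $\theta = \angle(u,v)$), push it forward to the curve $p\circ\gamma$ in $H$ joining $p(u)$ and $p(v)$, and bound its length. Writing $a(t) = \langle\gamma(t),w\rangle$ and differentiating $p\circ\gamma = \gamma/a$ using $|\gamma| = |\gamma'| = 1$ and $\langle\gamma,\gamma'\rangle = 0$ gives
\[
|(p\circ\gamma)'(t)|^2 = \frac{1}{a(t)^2} + \frac{a'(t)^2}{a(t)^4}.
\]
The crucial estimate is $|a'(t)| = |\langle \gamma'(t),w\rangle| \leq \sqrt{1-a(t)^2}$, obtained by decomposing $w = a(t)\gamma(t) + w^\perp$ with $w^\perp \perp \gamma(t)$ and noting that only $w^\perp$ contributes to the inner product with $\gamma'(t)$, while $|w^\perp|^2 = 1 - a(t)^2$. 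Substituting collapses the expression to $|(p\circ\gamma)'(t)|^2 \leq 1/a(t)^4$. Since $B(w,\pi/4)$ is spherically convex (as $\pi/4 < \pi/2$), the geodesic $\gamma$ stays inside $B(w,\pi/4)$, so $a(t) \geq \cos(\pi/4) = 1/\sqrt 2$ and $|(p\circ\gamma)'(t)| \leq 2$. Integrating over $[0,\theta]$ and using that a straight segment minimizes length in $H$ yields $|p(u)-p(v)| \leq \operatorname{length}(p\circ\gamma) \leq 2\theta$.

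\textbf{Main obstacle.} The one step that could go wrong is hitting a larger constant than $2$ in the upper bound. The naive estimate $|a'(t)| \leq 1$ (from Cauchy--Schwarz with $|w|=1$) only yields $|(p\circ\gamma)'(t)|^2 \leq 1/a(t)^2 + 1/a(t)^4 \leq 2 + 4 = 6$, giving the weaker constant $\sqrt 6$. The sharper bound $|a'(t)|^2 \leq 1 - a(t)^2$, together with the algebraic identity $1/a^2 + (1-a^2)/a^4 = 1/a^4$, is what makes the constant come out to exactly $2$ after invoking spherical convexity of $B(w,\pi/4)$.
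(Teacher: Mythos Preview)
Your proof is correct and takes a genuinely different route from the paper. The paper reduces to $d=2$, then argues case-by-case: for $u,v,w$ on a common great circle the distortion is governed by $\tan'(\varphi)\in[1,2]$ on $(0,\pi/4)$, and for $u,v$ equidistant from $w$ it is a homothety with ratio $1/\cos\varphi\in[1,\sqrt 2]$; it then asserts that these radial and tangential directions are the axes of the ellipse obtained by projecting an infinitesimal circle, so the distortion in every direction lies between the two extremes. Your argument bypasses this geometric decomposition entirely by bounding curve lengths directly: you push a segment in $H$ back to the sphere for the lower bound and a geodesic forward to $H$ for the upper bound, and the key observation $|a'(t)|^2\le 1-a(t)^2$ collapses the derivative to $1/a(t)^4$, which is exactly what delivers the sharp constant $2$. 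The advantage of your approach is that it is fully self-contained and avoids the slightly informal ``these are the extreme cases'' step in the paper; the paper's approach, on the other hand, makes the anisotropy of the projection (radial vs.\ tangential distortion) geometrically transparent.
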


\begin{proof}
    As $u,v,w$ are contained in a $2$-dimensional subsphere, it is enough to show the claim for $d=2$. First we calculate the distortion of length if $u$ and $w$ have the same latitude of longitude.
    
    Case 1: $u,v,w$ are on the same great circle. As we have $|p(u)-w| = \tan (\angle(u,w))$ and $1\leq \tan'(\varphi) \leq 2$ if $\varphi \in (0,\pi/4)$, we have $\angle(u,v) \leq |u-v| \leq 2 \angle(u,v)$.

    Case 2: $\angle(u,w) = \angle(v,w) = \varphi$. In this case $p$ acts on them as a homothety with ratio $1/\cos(\varphi)$, which is between $1$ and $\sqrt{2}$ if $\varphi \in (0,\pi/4)$.

    We can argue that these are the extreme cases and the distortion of length is always between $1$ and $2$. We show that for close enough points the distortion of distance is between $1$ and $2$. But then the global distortion has to be in the same interval.
    
    For every $u \in B(w,\pi/4)$ the points of $\S^d$ with a small enough given distance from $u$ form a circle inside $B(w,\pi/4)$. Its image under $p$ is an ellipse. Due to symmetry the axes of the ellipse correspond to point pairs with the same latitude or longitude. We have seen in Cases 1 and 2 that the ratios of the lengths of the axes and the radius of the circle are between $1$ and $2$. But then for every point $v$ in the circle we have $\angle(u,v) \leq |p(u)-p(v)| \leq 2 \angle(u,v)$.
\end{proof}

\begin{claim}\label{cl:distortion}
    If $K \subseteq B(w,\pi/8) \subset \mathbb{S}^d$ is convex and $\varepsilon \leq \pi/8$, then $\conv(K^\varepsilon) \subset K^{2\varepsilon}$.
\end{claim}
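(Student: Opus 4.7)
The plan is to reduce the spherical claim to its Euclidean counterpart via the gnomonic (central) projection $p$ used in the preceding claim, exploiting the fact that the Euclidean version is trivial: if $K'$ is Euclidean convex, then $(K')^{\varepsilon}$ (the Minkowski sum of $K'$ with a ball) is already Euclidean convex, so $\conv((K')^\varepsilon) = (K')^\varepsilon$. The distortion bound from the previous claim will let us transport the $\varepsilon$-neighborhood relation back and forth between $\mathbb{S}^d$ and the tangent hyperplane at the cost of a factor of $2$.

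First I would verify the containment $\conv(K^\varepsilon) \subseteq B(w, \pi/4)$. For any $u \in K^\varepsilon$ there is $v \in K$ with $\angle(u,v) \leq \varepsilon \leq \pi/8$ and $\angle(v,w) \leq \pi/8$, so the spherical triangle inequality gives $\angle(u,w) \leq \pi/4$; since $B(w, \pi/4)$ is spherically convex, its convex hull stays inside. This places everything in the domain where the previous claim applies. Then I would recall that $p$ sends great circles to straight lines (they arise as intersections with $2$-planes through the origin), hence sends spherical geodesic segments to Euclidean segments, so $p(K)$ is Euclidean convex and, more importantly, $p(\conv(X)) = \conv_{\mathrm{Eucl}}(p(X))$ for any $X \subseteq B(w,\pi/4)$.

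Now the central estimate: by the preceding claim, $\angle(u,v) \leq |p(u)-p(v)| \leq 2\angle(u,v)$ for all $u,v \in B(w,\pi/4)$. The right-hand inequality shows $p(K^\varepsilon) \subseteq (p(K))^{2\varepsilon}$, where the $2\varepsilon$-neighborhood is taken in the Euclidean metric on the tangent hyperplane. Since $p(K)$ is Euclidean convex, $(p(K))^{2\varepsilon}$ is also Euclidean convex, hence
\[
p(\conv(K^\varepsilon)) \;=\; \conv_{\mathrm{Eucl}}(p(K^\varepsilon)) \;\subseteq\; (p(K))^{2\varepsilon}.
\]
Finally, given any $u \in \conv(K^\varepsilon)$, pick $v \in K$ with $|p(u)-p(v)| \leq 2\varepsilon$; the left-hand inequality of the distortion bound yields $\angle(u,v) \leq 2\varepsilon$, so $u \in K^{2\varepsilon}$, as required.

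The main obstacle is conceptual rather than computational: one must check that $p$ genuinely commutes with taking convex hulls on the relevant domain (which follows from great circles mapping to lines) and that nothing we touch leaves $B(w,\pi/4)$, so that the distortion bound from the preceding claim is legitimately applicable. Once those two points are in place, the argument is a clean transfer through $p$ with the factor of $2$ coming entirely from the distortion estimate.
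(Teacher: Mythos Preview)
Your proposal is correct and follows essentially the same approach as the paper: transfer to the tangent hyperplane via the gnomonic projection, use the Euclidean fact that the $2\varepsilon$-neighborhood of a convex set is convex, and transfer back, with the factor of $2$ coming from the distortion estimate. The paper carries this out pointwise (picking $u',u''\in K^\varepsilon$ on a geodesic through $u$ and matching them to $v',v''\in K$), whereas you phrase it set-theoretically via $p(\conv(K^\varepsilon))=\conv_{\mathrm{Eucl}}(p(K^\varepsilon))\subseteq (p(K))^{2\varepsilon}$; you also make explicit the containment $\conv(K^\varepsilon)\subseteq B(w,\pi/4)$, which the paper uses without comment.
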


\begin{proof}
    If $u \in \conv (K^\varepsilon)$, then there are $u', u'' \in K^\varepsilon$ such that $u \in \conv(\{u', u''\})$. Let $v', v'' \in K$ be such that $\angle(u',v'), \angle(u'', v'') \leq \varepsilon$. Let $p$ be the central projection of $B(w,\pi/4)$ from the origin to the supporting hyperplane of $\mathbb{S}^d$ at $w$. As $u',u'', v', v'' \in B(w, \pi/4)$, we have $|p(u')- p(v')|, |p(u'')- p(v'')| \leq 2\varepsilon$ by Claim~\ref{cl:distortion}. As the projections of $\conv(\{u', u''\})$ and $\conv(\{v', v''\})$ are segments of a Euclidean space whose endpoints have distance at most $2 \varepsilon$, there is a point $v \in \conv(\{v', v''\})$ such that $|p(v)- p(u)| \leq 2 \varepsilon$. We have $\angle(u,v) \leq 2 \varepsilon$ by Claim~\ref{cl:distortion}. As $v \in K$, this proves $u \in K^{2\varepsilon}$.
\end{proof}

\begin{claim}\label{cl:capProj}
    Let $B = B(v, \varepsilon)$ be a cap, and $K$ be a $\rho$-fat convex set of $\Sd$. The projection $\proj_B K$ is a $\frac{\pi^3}{4}\rho^2$-fat convex set of the $(d-1)$-dimensional sphere $\partial B$.
\end{claim}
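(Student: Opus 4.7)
The plan is to show $\proj_B K$ is spherically convex and to sandwich it between two concentric caps around $\omega_0 = \proj_B x_K \in \partial B$. First we handle the trivial cases: if $K$ meets $\{v,-v\}$, then $\proj_B K = \partial B$ by convention, which is $1$-fat. Otherwise, since $\proj_B$ and $\partial B$ are invariant under $v \leftrightarrow -v$, we may assume $\theta = \angle(v, x_K) \leq \pi/2$. Writing $u \in \Sd \setminus \{v, -v\}$ as $u = \cos\phi\cdot v + \sin\phi\cdot\omega$ with $\omega \in \S^{d-1}$, the value $\proj_B u$ depends only on $\omega$; a direct check then shows that great arcs of $\Sd$ avoiding $\{v,-v\}$ project to great arcs of $\partial B$, so $\proj_B K$ inherits spherical convexity from $K$.

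The key technical step is the spherical law of sines in the triangle $v, x_K, u$ for $u \in \partial B(x_K, \eta)$ (with $\eta < \min(\theta, \pi-\theta)$), which yields $\proj_B B(x_K, \eta) = B(\omega_0, \arcsin(\sin\eta/\sin\theta))$ as a cap of $\partial B$. Applying this with $\eta = r_K$ (valid since $r_K < \theta$, as $v \notin K \supset B(x_K, r_K)$) gives the inner inclusion $B(\omega_0, r^*) \subset \proj_B K$ with $r^* = \arcsin(\sin r_K/\sin\theta)$. For the outer cap we split into two cases. If $R_K < \theta$, the same formula gives $\proj_B K \subset B(\omega_0, R^*)$ with $R^* = \arcsin(\sin R_K/\sin\theta)$, and the elementary inequalities $\arcsin x \leq \pi x/2$, $\arcsin x \geq x$, $\sin R_K \leq R_K$, and $\sin r_K \geq 2 r_K/\pi$ yield $R^*/r^* \leq \pi^2\rho/4$. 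If $R_K \geq \theta$, the ball $B(x_K, R_K)$ covers $v$ and projects onto all of $\partial B$, so the direct outer bound fails; here we use the convexity of $K$ and its separation from $v$ to enclose $K$ in the spherical lens $B(x_K, R_K) \cap H$, where $H$ is the closed hemisphere whose bounding great $(d-1)$-sphere is tangent to $B(x_K, r_K)$ at the point of $B(x_K, r_K)$ closest to $v$. By the rotational symmetry of this lens about the axis through $v$ and $x_K$, its projection is a cap around $\omega_0$, whose angular radius $R^*$ we bound by $\pi/2$ when $R_K \leq \pi/2$ (and by $\pi$ in general) via a coordinate calculation in the plane spanned by $v$ and $x_K$. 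Combining with $r^* \geq 2/(\pi\rho)$, which follows from $r_K \geq R_K/\rho \geq \theta/\rho$ together with the sine inequalities, we obtain $R^*/r^* \leq \pi^2\rho/2$, within the claimed $\pi^3\rho^2/4$ since $\rho \geq 1$.

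The hard part is the case $R_K \geq \theta$, where the containing ball of $K$ covers $v$ and its projection is all of $\partial B$, making the direct outer bound vacuous. Convexity of $K$ and spherical separation from $v$ are essential to replace this ball by a tighter lens, and the generous $\rho^2$ factor in the claimed bound absorbs the crude estimate $R^* \leq \pi/2$ (or $\pi$) used in this lens argument.
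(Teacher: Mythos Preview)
Your overall strategy matches the paper's: establish convexity from how $\proj_B$ acts on great arcs, compute the radii of the projected inner and outer caps via a sine law, and split into cases according to whether $B(x_K,R_K)$ contains $v$. Your use of the spherical law of sines, giving the clean formula $\arcsin(\sin\eta/\sin\theta)$ for the projected radius, is in fact tidier than the paper's Euclidean chord-length computation in $\mathbb{R}^{d+1}$.

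There is, however, a real gap in the case $R_K\geq\theta$. You assert $K\subset B(x_K,R_K)\cap H$, where $H$ is the hemisphere bounded by the great $(d-1)$-sphere tangent to $B(x_K,r_K)$ at its point nearest $v$; but nothing forces $K\subset H$. A $\rho$-fat convex $K$ can bulge across that particular tangent sphere while still avoiding $v$: for instance, $K=\conv\bigl(B(x_K,r_K)\cup\{q\}\bigr)$ with $q$ a point on the $v$--$x_K$ meridian just on the $v$-side of $\partial H$ but still inside $B(x_K,R_K)$. So the lens need not enclose $K$, and the bound $R^*\leq\pi/2$ obtained from its rotational symmetry is unjustified. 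Fortunately the lens is superfluous: your own parenthetical fallback $R^*\leq\pi$ holds trivially since $\proj_B K\subset\partial B = B(\omega_0,\pi)$, and together with your correct lower bound $r^*\geq 2/(\pi\rho)$ this already yields $R^*/r^*\leq\pi^2\rho/2\leq\pi^3\rho^2/4$. This is exactly what the paper does in its case $\varphi_R\geq\pi/2$; simply delete the lens construction and the proof goes through.
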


\begin{proof}
    If $\proj_B K = B$, then it is convex by definition. Otherwise, as $\proj_B$ maps (shorter) great circle arcs to (shorter) great circle arcs or points, the projection is convex. Let $\rho' = \frac{\pi^3}{4}\rho^2$. To show $\rho'$-fatness, we need to examine how the radius of $B(x_K, r_K)\subset K$ and $B(x_K, R_K)\supset K$ change after the projection.
    
    Assume that $B(x, r)$ is any cap of $\S^d$ and $v \not\in B(x,r)$. Embed $\S^d$ in $\mathbb{R}^{d+1}$ as the unit sphere and look at the simplex spanned by $\underline{0}, x, v$ and $b$ where $b \in \partial B(x, r)$. The radius $\varphi$ of $\proj_{B(v,\varepsilon)} B(x,r)$ will be $\max_b \angle(b-v,x-v)$.
    
    As $\frac{\sin \angle(b-v,x-v)}{|x-b|} = \frac{\sin \angle (x-b, v-b)}{|x-v|}$ by the law of sines, $\angle(b-v,x-v)$ is maximal if $\angle(x-b,v-b) = \pi/2$. Fix such a $b$. We have $\sin \varphi = \frac{|x-b|}{|x-v|}$.

    Let $\angle(v, x) = t$. As $|x| = |v| = |b| = 1$, we have $|x-v| = 2 - 2\cos t$ and $|x-b| = 2 - 2\cos r$ by the law of cosines. As $1 - \cos t = \sin^2(t/2)$ and $ \sin t \leq t$, we have $\varphi \geq \sin \varphi = \frac{|x-b|}{|x-v|} =  \frac{\sin^2(r/2)}{\sin^2(t/2)}$. If $\varphi \leq \pi/2$, then we also have $\varphi \leq \frac{\pi}{2}\sin \varphi = \frac{\pi}{2}\frac{\sin^2(r/2)}{\sin^2(t/2)}$.

    Now returning to the proof, for simplicity we will write $\proj$ for $\proj_B$.
    Let $\proj B(x_K, r_K) = B(\proj x_K, \varphi_r)$ and $\proj B(x_K, R_K) = B(\proj x_K, \varphi_R)$.

    If $\varphi_R \geq \pi/2$, then $R \geq t$. In this case we have either $\varphi_r \geq \pi/2$ or $\varphi_r \geq \sin \varphi_r = \frac{\sin^2(r/2)}{\sin^2(t/2)} \geq \frac{4}{\pi^2}\frac{r^2}{t^2} \geq \frac{4}{\pi^2}\frac{r^2}{t^2} \geq \frac{4}{\pi^2\rho^2}$ and so $\varphi_R \leq \pi \leq \frac{\pi^3}{4}\rho^2\varphi_r$.
    
    Otherwise $\varphi_R < \varphi/2$ and we have $\frac{\varphi_R}{\varphi_r} \leq \frac{\pi}{2}\frac{\sin \varphi_R}{\sin \varphi_r} = \frac{\pi}{2}\frac{\sin^2(R/2)}{\sin^2(r/2)} \leq \frac{\pi^3}{8}\frac{R^2}{r^2} \leq \frac{\pi^3}{8}\rho^2$. As $\proj B(x_K, r_K) \subset \proj K \subset \proj B(x_K, R_K)$, in this case $K$ is $\frac{\pi^3}{8}\rho^2$-fat.
\end{proof}

\begin{claim}\label{cl:hitSmallerCaps}
    For every $d$, $\rho$ and $k$ there exists a constant $c>0$ such that the following holds for every $\varepsilon > 0$. Let $K_2, \ldots, K_n$ be $\rho$-fat convex sets of $\Sd$ with $\varepsilon \leq R_{K_i}$ for all $i \in \{2, \ldots, n\}$. If $\conv(K_2^\varepsilon), \ldots, \conv(K_n^\varepsilon)$ can be hit with a great $k$-sphere, then $cn$ sets from $K_2, \ldots, K_n$ can be hit with a great $k$-sphere as well.
\end{claim}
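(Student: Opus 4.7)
The plan is to adapt the pigeonhole averaging argument from the end of the Euclidean case of Theorem~\ref{thm:CFHballs} to the sphere, using the chart estimates from Section~\ref{sec:prelim}. First, I reduce to sets of bounded outer radius: if at least a $c$-fraction of the $K_i$ satisfy $R_{K_i} > \pi/16$, each contains a cap of radius $\geq \pi/(16\rho)$, so a volumetric pigeonhole on $\Sd$ gives a point in a constant fraction of them, and any great $k$-sphere through that point suffices. Otherwise I discard the few large sets and assume $R_{K_i} \leq \pi/16$ for every surviving $i$. Then $K_i \subseteq B(x_{K_i}, \pi/8)$ and Claim~\ref{cl:distortion} gives $\conv(K_i^\varepsilon) \subseteq K_i^{2\varepsilon}$, so from $y_i \in F \cap \conv(K_i^\varepsilon)$ I extract $z_i \in K_i$ with $\angle(y_i,z_i) \leq 2\varepsilon$. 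Claim~\ref{cl:fatproperty} applied with $y = z_i$ and $t = \varepsilon \leq R_{K_i}$ yields
\[
\vol(K_i \cap B(z_i, \varepsilon)) \geq \gamma\, \vol(B(z_i, \varepsilon)) \geq c_1\, \varepsilon^d,
\]
with $c_1$ depending only on $d$ and $\rho$.

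Next I average over nearby great $k$-spheres. Parametrize such spheres by vectors $v$ in a $(d-k)$-dimensional transverse direction to $F$ (small rotations of its defining $(k+1)$-subspace), so that as $v$ ranges over the ball $\mathcal{N} = \{v : \|v\| \leq 3\varepsilon\}$ the sphere $F_v$ sweeps out the $3\varepsilon$-tube $T = T_{3\varepsilon}(F)$ around $F$, and each $B(z_i,\varepsilon)$ lies in $T$. Since $3\varepsilon \leq 3\pi/16 < \pi/8$, the gnomonic chart used in the proof of Claim~\ref{cl:distortion} embeds $T$ with two-sided bounded distortion in Euclidean space, so the Crofton/co-area identity
\[
\int_{\mathcal{N}} \vol(F_v \cap S)\, dv = c_2\, \vol(S \cap T), \qquad S \subseteq T \text{ Borel},
\]
holds with $c_2 = c_2(d,k) > 0$. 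Applying it to $S_i = K_i \cap B(z_i,\varepsilon)$ and using the density bound gives $\int_{\mathcal{N}} \vol(F_v \cap S_i)\, dv \geq c_1 c_2\, \varepsilon^d$, while $\vol(F_v \cap S_i) \leq \vol(F_v \cap B(z_i,\varepsilon)) = O(\varepsilon^k)$ uniformly in $v$. Hence $\{v \in \mathcal{N} : F_v \cap K_i \neq \emptyset\}$ has measure at least $c_3\,\varepsilon^{d-k}$, a constant fraction of $\vol(\mathcal{N}) = \Theta(\varepsilon^{d-k})$. Summing over the surviving $K_i$ and averaging in $v$, some $v^\star$ yields a great $k$-sphere $F_{v^\star}$ intersecting at least a constant fraction of them, and combining with the discarded sets from the reduction step still leaves a $cn$ fraction pierced.

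The main obstacle is the clean justification of the co-area identity on $\Sd$ with constants depending only on $d$ and $k$ (equivalently, two-sided bounds on the Jacobian of the parametrization $v \mapsto F_v$ and on the $k$-volume of $F_v \cap B(z_i,\varepsilon)$). Because $3\varepsilon < \pi/8$, the entire tube $T$ sits inside a single gnomonic chart with uniformly bounded two-sided distortion, so the spherical identity reduces to the elementary Euclidean co-area formula up to a constant factor depending only on $d$ and $k$; no essentially new analytic ingredient is required beyond the chart estimates already established in Section~\ref{sec:prelim}.
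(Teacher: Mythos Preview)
Your reduction to $R_{K_i}\le \pi/16$, the use of Claim~\ref{cl:distortion} to find $z_i\in K_i$ with $\angle(y_i,z_i)\le 2\varepsilon$, and the density bound from Claim~\ref{cl:fatproperty} are all fine and match the paper. The gap is in the averaging step on $\S^d$.

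First, great $k$-spheres near $F$ do not form a $(d-k)$-parameter family foliating the tube. The space of $(k+1)$-dimensional linear subspaces near the one defining $F$ has dimension $(k+1)(d-k)$; there is no canonical ``transverse $(d-k)$-direction'' giving you translates $F_v$ as in $\R^d$. Any $(d-k)$-dimensional family of rotations of $F$ you write down will have basepoints where all the $F_v$ meet $F$ (think of great circles in $\S^2$ obtained by tilting the equator about a fixed axis: they all pass through the two pivot points), so the Jacobian of $v\mapsto F_v$ degenerates and the identity $\int_{\mathcal N}\vol(F_v\cap S)\,dv=c_2\,\vol(S\cap T)$ with a constant $c_2>0$ simply fails. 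Second, the tube $T_{3\varepsilon}(F)$ does not sit in a single gnomonic chart: $F$ itself has diameter $\pi$, so the chart argument you invoke (valid on a cap of radius $\pi/4$) cannot cover $T$. (Incidentally, $3\pi/16>\pi/8$, so that inequality is also off, though this is minor.) Because of these two issues your co-area identity is not justified, and the averaging does not close.

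The paper handles exactly this difficulty by first \emph{localizing along $F$}: it takes a metric $\delta$-net $N$ of $F$ (of size depending only on $d,k$), pigeonholes the nearest points $f(K_i)\in F$ into a single $\delta$-ball around some $u\in N$, and only then projects. Concretely, it composes the spherical projections $\proj_{B(u_j,\pi/2)}$ for an orthonormal basis $u,u_1,\dots,u_k$ of the subspace defining $F$, sending $F$ to the antipodal pair $\{\pm u\}$ inside a $(d-k)$-sphere; Claim~\ref{cl:capProj} keeps the images $\rho'$-fat, and the problem reduces to the $k=0$ case, which is the volumetric pigeonhole you already wrote. Morally this projection is the dual of your averaging, but the localization via the $\delta$-net is what replaces the nonexistent global product structure of the tube and is the missing idea in your argument.
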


\begin{proof}
    If $R_{K_i} \geq \pi/8$ for at least half of the sets, then, as those sets all contain caps of radius at least $\pi/(8\rho)$, a constant fraction of them can be hit even with a single point by the volumetric pigeonhole principle. Otherwise delete all the sets $K_i$ with $R_{K_i} \geq \pi/8$ and at least half of the sets remain. Let the family of the remaining at least $n/2$ sets be $\mathcal{H}$.
    
    Let $F$ be the great $k$-sphere hitting all the sets of $\{\conv (K^\varepsilon): K \in \mathcal{H}\}$. In this case,l $K \in \mathcal{H}$ are at distance at most $2\varepsilon$ from $F$ by Claim~\ref{cl:distortion}.
    
    If $k=0$, let $F = \{\pm v\}$. For all $K \in \mathcal{H}$ there exists a $y_{K} \in K$ which is at distance at most $2\varepsilon$ from one of the two antipodal points of $F$. As $K$ occupies a constant fraction of $B(y_{K}, \varepsilon)$ by Claim~\ref{cl:fatproperty} and $B(y_{K}, \varepsilon)$ contains a constant fraction of $B(v, 2\varepsilon) \cup B(-v, 2\varepsilon)$, the volumetric pigeonhole principle yields a point intersecting $cn$ of the sets of $\mathcal{H}$.

    We can reduce the $k>0$ case to the $k=0$ case with a suitable projection. For a $K \in \mathcal{H}$, let $f(K)$ be a closest point of $F$ to $K$. Let $\delta = \delta(d,k) > 0$ be a small enough number, and let $N$ be a metric $\delta$-net of $F$ (its size depends only on $k$ and $d$). By the pigeonhole principle, there exists a point $u\in N$ and a subfamily $\mathcal{H}' \subset \mathcal{H}$ with $|\mathcal{H}'| \geq c'|\mathcal{H}|$ such that for all $K \in \mathcal{H}'$, the point $f(K)$ has distance at most $\delta$ from $u$. 

    Embed $\S^d$ into $\mathbb{R}^{d+1}$ as the unit sphere and let $V$ be the $(k+1)$-dimensional linear subspace of $\mathbb{R}^{d+1}$ with $V \cap \S^d = F$. Let $u, u_1, \ldots, u_{k}$ be an orthonormal basis of $V$. Let $B_i = B(u_i, \pi/2)$ be the halfsphere centered at $u_i$, let $\proj_i = \proj_{B_i}$ be the projection from $\S^d$ to $\partial B_i$ as defined above, and let $\proj_{F,u} = \proj_1 \circ \proj_2 \circ \ldots \circ \proj_k$. The repeated application of Claim~\ref{cl:capProj} gives us that if $K \subset \S^d$ is a $\rho$-fat convex set, then $\proj_{F_u} K$ is a $\rho'=c^k\rho^{2k}$-fat convex set where $c \geq 1$ is a universal constant.
    
    Apply $\proj_{F,u}$ to $F$ and the sets of $\mathcal{H}'$ to get projections in the great $(d-k)$-sphere $\partial B_2 \cap \ldots \cap \partial B_k$. This way $F$ becomes a pair of antipodal points $\{\pm u\}$, the projections of members of $\mathcal{H}'$ are $\rho'$-fat, and if $\delta(d,k)$ is sufficiently small, they are at distance at most $3\varepsilon$ from $u$. By the $k=0$ case we can find a point intersecting a constant fraction of the sets in $\{\proj_{F,u} K: K \in \mathcal{H}'\}$. The preimage of this point under $\proj_{F,u}$ is part of a great $k$-sphere of $\mathbb{S}^d$, and hits a constant fraction of $\mathcal{H}'$, thus a constant fraction of $\mathcal{H}$ and of $\{K_2, \ldots, K_n\}$.
\end{proof}

\section{\texorpdfstring{$(p,k+2)$}{(p,k+2)}-theorems}\label{sec:pq}

The specific $\rho=1$ case of the fractional Helly results of Section \ref{sec:fh}  implies $(p,q)$-type results for $k$-flats intersecting balls. Theorem~\ref{thm:CFHballs} implies the following generalization of Theorem \ref{thm:pqballs}.

\begin{theorem}\label{thm:Cpqballs}
    For every three positive integers $d$, $k < d$ and $p \geq k+2$ there exists a $C$ such that the following holds. If $\mathcal{F}_1,\ldots,\mathcal{F}_p$ are families of closed balls in $\mathbb{R}^d$ such that for every $B_1 \in \mathcal{F}_1,\ldots,B_p\in\mathcal{F}_p$  there are $k+2$ balls that can be hit by a single $k$-flat, then there exist an $i\in [p]$ and at most $C$ $k$-flats hitting all balls of $\mathcal{F}_i$.
\end{theorem}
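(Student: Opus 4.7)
Proof plan. The strategy is the standard Alon--Kleitman framework for deriving $(p,q)$-theorems from fractional Helly theorems. Two ingredients are needed: (i) the colorful fractional Helly theorem for balls, which is Theorem~\ref{thm:CFHballs} specialized to $\rho = 1$; and (ii) a weak $\varepsilon$-net theorem for the range space whose ground set is a family of balls in $\mathbb{R}^d$ and whose ranges are $\{B : L \cap B \neq \emptyset\}$ for $k$-flats $L$. For (ii), the incidence condition ``$\dist(\text{center}(B), L) \leq \text{rad}(B)$'' is a polynomial inequality of bounded degree in the parameters of $B$ and $L$, so the range space has bounded VC-dimension and the Haussler--Welzl theorem produces $\varepsilon$-nets of size $O(\varepsilon^{-1}\log\varepsilon^{-1})$.

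First, I would reduce the colorful $(p, k+2)$-hypothesis to a colorful fractional Helly premise by pigeonhole. For each colorful $p$-tuple $(B_1, \ldots, B_p) \in \mathcal{F}_1 \times \cdots \times \mathcal{F}_p$, the hypothesis provides a $(k+2)$-subtuple with a $k$-transversal. Averaging over the $\binom{p}{k+2}$ possible index subsets yields a fixed $I \subseteq [p]$ with $|I| = k+2$ such that at least a $\gamma := 1/\binom{p}{k+2}$-fraction of colorful $(k+2)$-tuples from $\{\mathcal{F}_i : i \in I\}$ have a $k$-transversal. Applying Theorem~\ref{thm:CFHballs} with $\rho = 1$ then produces some $i^* \in I$ and a subfamily of $\mathcal{F}_{i^*}$ of size at least $\beta(\gamma)|\mathcal{F}_{i^*}|$ with a common $k$-transversal, where $\beta(\gamma)$ depends only on $d, k, p$.

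Next, I would feed this into the Alon--Kleitman LP-duality argument to convert the fractional transversal guarantee into a bounded integer piercing. The colorful $(p, k+2)$-assumption is preserved under replacing each color class $\mathcal{F}_j$ by any subfamily, so the conclusion of the previous step applies uniformly to all restrictions (and, by standard compactness, to arbitrary positive measures on the color classes). An iterated application, tracking the surviving weight in each color class and using a Turán-type double-count, shows that some color class $\mathcal{F}_i$ has bounded fractional piercing number $\tau^*(\mathcal{F}_i) \leq \tau_0 = \tau_0(d, k, p)$ with respect to the $k$-flat range space. Ingredient (ii) then converts this fractional piercing into an integer piercing of $\mathcal{F}_i$ by at most $C = C(d, k, p)$ many $k$-flats, as desired.

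The main obstacle is the colorful bookkeeping in the second step: the pigeonhole of the first step may single out different color classes under different weightings of the families, so extracting a single index $i \in [p]$ on which the fractional piercing bound holds requires a careful iteration (or, equivalently, a rainbow-aware application of the minimax theorem to the colorful bipartite hypergraph between the $\mathcal{F}_i$ and the $k$-flats). Once this combinatorial bookkeeping is in place, the geometric content reduces entirely to ingredients (i) and (ii), and the classical Alon--Kleitman chain \emph{fractional Helly $+$ weak $\varepsilon$-nets $\Rightarrow$ $(p, q)$-theorem} closes the argument.
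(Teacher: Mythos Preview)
Your proposal is correct and follows essentially the same route as the paper: the colorful fractional Helly theorem (Theorem~\ref{thm:CFHballs} with $\rho=1$) combined with the Alon--Kleitman machinery (in its colorful form from B\'ar\'any et al.) bounds $\tau^*$, and then the Haussler--Welzl $\varepsilon$-net theorem, applicable because the incidence ``$k$-flat meets ball'' is a bounded-degree polynomial condition and hence the range space has bounded VC-dimension, converts this into a bound on $\tau$. The ``colorful bookkeeping'' you flag as the main obstacle is precisely what is worked out in \cite{barany2014colourful}, which the paper simply cites rather than reproducing; your sketch of how to handle it (iterating over weightings and tracking which color class gets hit) is in the right spirit.
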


The following is a corollary of Theorem~\ref{thm:CFHcaps}.

\begin{theorem}\label{thm:Cpqcaps}
    For every three positive integers $d$, $k < d$ and $p \geq k+2$ there exists a $C$ such that the following holds. If $\mathcal{F}_1,\ldots,\mathcal{F}_p$ are finite families of closed caps in $\S^d$ such that for every $B_1 \in \mathcal{F}_1,\ldots,B_p\in\mathcal{F}_p$  there are $k+2$ caps that can be hit by a great $k$-sphere, then there exist an $i\in [p]$ and at most $C$ great $k$-spheres hitting all caps of $\mathcal{F}_i$.
\end{theorem}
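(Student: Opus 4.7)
The plan is to deduce Theorem~\ref{thm:Cpqcaps} from the colorful spherical fractional Helly theorem (Theorem~\ref{thm:CFHcaps}) by the standard Alon--Kleitman mechanism that turns a fractional Helly theorem into a $(p,q)$-theorem, paralleling exactly how the paper obtains Theorem~\ref{thm:Cpqballs} from Theorem~\ref{thm:CFHballs} in the Euclidean setting.

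First I would carry out a Tur\'an-type averaging: the colorful $(p,k+2)$-property says every colorful $p$-tuple contains a $(k+2)$-subtuple with a common great $k$-transversal, so pigeonholing over the $\binom{p}{k+2}$ choices of $(k+2)$-element color subsets yields some $I\in\binom{[p]}{k+2}$ for which at least a $\binom{p}{k+2}^{-1}$-fraction of the colorful $(k+2)$-tuples in $\prod_{i\in I}\F_i$ admit a common great $k$-transversal. Applying Theorem~\ref{thm:CFHcaps} to $(\F_i)_{i\in I}$ with $\alpha=\binom{p}{k+2}^{-1}$ then produces some $i^*\in I$ and a subfamily $\G\subseteq\F_{i^*}$ of size at least $\beta(\alpha)\,|\F_{i^*}|$ sharing a common great $k$-transversal. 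Because the colorful $(p,k+2)$-hypothesis is inherited by all sub-configurations, the same first-selection remains available after arbitrary weighted reweightings of the $\F_i$ or after removing already-hit sets, and this is the input needed for the Alon--Kleitman LP-duality argument to bound the fractional piercing number $\tau^*(\F_{i^*})\le C_0(d,k,p)$ for some color class $i^*$.

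The final step converts this fractional bound into an integer bound $\tau(\F_{i^*})\le C(d,k,p)$ via a weak $\varepsilon$-net theorem for great $k$-spheres hitting caps on $\S^d$, the spherical analog of the weak $\varepsilon$-net theorem for $k$-flats and balls in $\R^d$ that the paper notes is available in the Euclidean case. The main obstacle I anticipate is establishing this spherical weak $\varepsilon$-net cleanly. The natural route is to transfer the Euclidean construction via gnomonic projection charts, in which small caps become convex sets sandwiched between two balls of bounded ratio and great $k$-spheres become $k$-flats, while caps of angular radius exceeding $\pi/2$ are automatically pierced by every great $k$-sphere (since no great $k$-sphere fits inside an open hemisphere) and hence require no separate net element. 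With these ingredients in place, combining Theorem~\ref{thm:CFHcaps} and the weak $\varepsilon$-net along the standard Alon--Kleitman pipeline yields the desired bounded piercing of some $\F_i$ by great $k$-spheres.
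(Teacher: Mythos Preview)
Your overall architecture matches the paper's: reduce to the colorful spherical fractional Helly theorem (Theorem~\ref{thm:CFHcaps}), feed this into the Alon--Kleitman machinery to bound the fractional piercing number of some color class, and then upgrade $\tau^*$ to $\tau$. The paper does not spell out the colorful LP-duality step either, simply pointing to \cite{alon2002transversal} and \cite{barany2014colourful}, so at that level your sketch is on par with the paper's.

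The substantive difference is in the final step, and here the paper takes a much cleaner route than the one you propose. You want to build a \emph{weak} $\varepsilon$-net for great $k$-spheres versus caps by pushing the Euclidean construction through gnomonic charts, handling large caps separately, and patching. The paper avoids all of this: it observes that the set system on the ground set of great $k$-spheres whose sets are ``all great $k$-spheres meeting a fixed cap'' is semi-algebraic of bounded description complexity, hence has bounded VC-dimension by Milnor--Thom, and then applies the Haussler--Welzl \emph{strong} $\varepsilon$-net theorem (Theorem~\ref{thm:eps}) directly on the sphere. This one-line argument replaces the chart-by-chart transfer you flagged as ``the main obstacle,'' and it is also the reason the paper can handle caps but not general $\rho$-fat spherical sets (for which neither Theorem~\ref{thm:weakeps} nor Theorem~\ref{thm:eps} applies). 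Your gnomonic-projection route is not wrong in principle, but it buys nothing over the VC-dimension argument and incurs exactly the technical overhead you anticipated.
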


Note that the case where the families of balls (caps) can be infinite follows from the finite case by a standard compactness argument. The proofs of the finite versions are simple applications of the Alon-Kleitman method established in \cite{alon1992piercing}, later described in a more general setting in \cite{alon2002transversal}, and adapted to colorful variants in \cite{barany2014colourful}. The abstract (purely combinatorial) proof described in \cite{alon2002transversal} has two main steps. To state them, we phrase the fractional Helly and $(p,q)$-theorems in an abstract, purely combinatorial language as follows. To keep the presentation simple, we only describe the arguments in a non-colorful setting. The interested reader can find a version of the colorful arguments in \cite{barany2014colourful}.

Let $\mathcal{H}$ be a set system over the base set $V$. For an integer $q$ and a function $\beta\colon (0,1] \to (0,1]$, the set system $\mathcal{H}$ satisfies the fractional Helly property FH($q, \beta$) if the following holds. For every $\alpha > 0$ and every finite family $\mathcal{G} \subset \mathcal{H}$, if $\alpha\binom{|\mathcal{G}|}{q}$ of the $q$-tuples of $\mathcal{G}$ has a nonempty intersection, then there exists a subfamily $\mathcal{G}' \subset \mathcal{G}$ of size $\beta(\alpha)|\mathcal{G}|$ such that all the members of $\mathcal{G}'$ have a point in common. The  Katchalski and Liu fractional Helly Theorem \ref{thm:fh} states that if $V = \mathbb{R}^d$, and $\mathcal{H}$ consists of all the convex sets, then there exists a $\beta$ such that $\mathcal{H}$ satisfies FH($d+1, \beta$). Our Theorem~\ref{thm:FHballs} states that if $V = \{k\text{-flats of }\mathbb{R}^d\}$ and $\mathcal{H}$ consists of families of $k$-flats intersecting a common $\rho$-fat convex set, then $\mathcal{H}$ satisfies FH($k+2, \beta$) with some $\beta$.

A set system $\mathcal{H}$ satisfies the $(p,q)$-condition if among any $p$ members of $\mathcal{H}$, some $q$ intersect. Note that for $p \geq q \geq r$, the $(p,q)$-condition implies the $(p,r)$-condition. The transversal number $\tau(\mathcal{H})$ denotes the minimum size of a set $T \subset V$ with $T \cap H \neq \emptyset$ for all $H \in \mathcal{H}$. The fractional transversal number $\tau^*(\mathcal{H})$ is the minimum $\sum_{v \in V} t(v)$ over functions $t\colon V \to [0,1]$ with $\sum_{v \in H} t(v) \geq 1$ for all $H \in \mathcal{H}$. Alon, Kalai, Matou\v sek and Meshulam proved the following two theorems. 

\begin{theorem}[Theorem 8 in Alon, Kalai, Matou\v sek and Meshulam \cite{alon2002transversal}]
If $\mathcal{H}$ satisfies FH($q, \beta$) and the $(p,q)$-condition with some $p \geq q$, then $\tau^*(\mathcal{H}) \leq O_{p,q, \beta}(1)$.
\end{theorem}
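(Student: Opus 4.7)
The plan is to combine LP duality with a short Turán-type averaging step that converts the combinatorial $(p,q)$-hypothesis into the density hypothesis of FH$(q,\beta)$, and then to apply FH$(q,\beta)$ to force a common point of abnormally high incidence in a near-optimal fractional matching.

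First, by LP duality $\tau^*(\mathcal{H})=\nu^*(\mathcal{H})$, the fractional matching number, defined as the supremum of $\sum_{H\in\mathcal{H}}w(H)$ over finitely supported functions $w\colon\mathcal{H}\to[0,1]$ satisfying $\sum_{H\ni v}w(H)\leq 1$ for every $v\in V$. I would fix a near-optimal such $w$, clear denominators to obtain integer multiplicities $w'(H)=N w(H)$, and form the finite multiset $\mathcal{G}$ of members of $\mathcal{H}$ in which each $H$ appears with multiplicity $w'(H)$. Then $|\mathcal{G}|=N\nu^*(\mathcal{H})$, while every $v\in V$ lies in at most $N$ members of $\mathcal{G}$ counted with multiplicity.

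Next I would extract a positive density of intersecting $q$-tuples from the $(p,q)$-condition. Since repetitions only help, the $(p,q)$-property passes to $\mathcal{G}$: among any $p$ members of $\mathcal{G}$ there are $q$ with a common point. Let $I$ denote the family of $q$-subsets of $\mathcal{G}$ with nonempty common intersection. Each $p$-subset of $\mathcal{G}$ contains at least one element of $I$, and each element of $I$ lies in exactly $\binom{|\mathcal{G}|-q}{p-q}$ many $p$-subsets, so double-counting the pairs (($p$-subset, intersecting $q$-subset of it)) gives
\[
\binom{|\mathcal{G}|}{p}\;\leq\;|I|\cdot\binom{|\mathcal{G}|-q}{p-q},
\]
hence $|I|\geq\binom{|\mathcal{G}|}{q}/\binom{p}{q}$. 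Therefore an $\alpha:=1/\binom{p}{q}$ fraction of all $q$-subsets of $\mathcal{G}$ are intersecting.

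Applying FH$(q,\beta)$ to $\mathcal{G}$ now produces a subfamily of size at least $\beta(\alpha)|\mathcal{G}|$ all of whose members contain a common point $v^{*}\in V$. But $v^{*}$ lies in at most $N$ members of $\mathcal{G}$, so
\[
N\;\geq\;\beta(\alpha)\,|\mathcal{G}|\;=\;\beta(\alpha)\,N\,\nu^*(\mathcal{H}),
\]
which gives $\tau^*(\mathcal{H})=\nu^*(\mathcal{H})\leq 1/\beta\bigl(1/\binom{p}{q}\bigr)$, a constant depending only on $p$, $q$ and $\beta$. The main obstacle is really the little double-counting observation in the middle paragraph: it is what converts the purely combinatorial $(p,q)$-condition into the density hypothesis that FH$(q,\beta)$ needs. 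Once that is in place, the LP-duality framing and the contradiction between "many sets through one point" and "low multiplicity of every point in an optimal matching" close the argument cleanly.
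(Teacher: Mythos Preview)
The paper does not prove this statement; it is quoted as Theorem~8 from \cite{alon2002transversal} and used as a black box, so there is no in-paper argument to compare yours against. Your outline is exactly the standard Alon--Kleitman argument via LP duality and fractional matchings, which is also what the cited reference does.

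One step, however, is not correct as you have written it. The assertion ``since repetitions only help, the $(p,q)$-property passes to $\mathcal G$: among any $p$ members of $\mathcal G$ there are $q$ with a common point'' is false in the abstract setting. For instance, the four-element system $\{A,B,C,D\}$ with $A\cap B=\emptyset$ but $A\cap C\cap D\neq\emptyset$ satisfies the $(4,3)$-condition, yet the multiset $\{A,A,B,B\}$ contains no intersecting $3$-sub-multiset; so your double count ``each $p$-subset of $\mathcal G$ contains at least one element of $I$'' is unjustified. The standard repair: either $\nu^*(\mathcal H)$ is already bounded by a constant depending only on $p$, or the fraction of $p$-subsets of $\mathcal G$ in which two positions carry the same underlying member of $\mathcal H$ is at most $\binom{p}{2}N/(|\mathcal G|-1)<\tfrac12$. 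On the remaining $p$-subsets the $(p,q)$-hypothesis applies verbatim, your double count gives $|I|\ge\tfrac12\binom{|\mathcal G|}{q}/\binom{p}{q}$, and with $\alpha=\tfrac12\binom{p}{q}^{-1}$ the rest of your argument goes through to yield $\tau^*(\mathcal H)\le 1/\beta(\alpha)$.
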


Let $\mathcal{H}^\cap$ be the family of all sets that are the intersections of some members of $\mathcal{H}$.

\begin{theorem}[Theorem 9 in Alon, Kalai, Matou\v sek and Meshulam \cite{alon2002transversal}]\label{thm:weakeps}
For every $q$ and $\beta$ there exists a function $f_{q,\beta}$ such that if $\mathcal{H}^\cap$ satisfies FH($q, \beta$), then $\tau(\mathcal{H}) \leq f_{q,\beta}(\tau^*(\mathcal{H}))$.
\end{theorem}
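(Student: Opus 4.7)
The plan is to adapt the Alon--Kleitman iterative scheme that converts fractional Helly into an integer transversal, picking one point at a time via a first-selection step driven by FH$(q,\beta)$ on $\mathcal{H}^\cap$.

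First I would convert the fractional transversal into a measure: a fractional transversal $t\colon V \to [0,1]$ attaining $\tau^*:=\tau^*(\mathcal{H})$ normalizes to a probability measure $\mu(v):=t(v)/\tau^*$ on $V$ satisfying $\mu(H) \ge 1/\tau^*$ for every $H\in\mathcal{H}$. Drawing a $\mu$-random point $v$ together with an independent uniformly random $q$-tuple $(H_1,\ldots,H_q)$ from $\mathcal{H}$, a short Jensen calculation gives $\Pr[v\in H_1\cap\cdots\cap H_q]\ge \tau^{*-q}$, because the expected fraction of sets through $v$ is at least $1/\tau^*$ and the convexity of $x\mapsto x^q$ lifts this to the $q$-tuple statement. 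Consequently at least a $\tau^{*-q}$ fraction of $q$-tuples of $\mathcal{H}$ have nonempty intersection.

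Since each such $q$-fold intersection lies in $\mathcal{H}^\cap$, the hypothesis FH$(q,\beta)$ yields a single point $p^*\in V$ common to a subfamily $\mathcal{G}\subseteq \mathcal{H}$ of size at least $c(\tau^*)|\mathcal{H}|$ with $c(\tau^*):=\beta(\tau^{*-q})>0$. I would add $p^*$ to the transversal, discard $\mathcal{G}$, and iterate; since $\tau^*$ is monotone under passing to subfamilies, the lower bound $c(\tau^*)$ on the fraction removed persists at every round. To promote the naive $O(\log|\mathcal{H}|/c(\tau^*))$ iteration bound to one depending only on $\tau^*$, I would interleave the density argument with the LP-dual fractional matching: the dual matching has total weight $\tau^*$ and puts at most $1$ unit on the sets through any single point, so after a bounded number of density rounds the residual fractional matching must drop by $\Omega(1)$. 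Inducting on $\lceil\tau^*\rceil$ then yields $\tau(\mathcal{H})\le f_{q,\beta}(\tau^*)$.

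The main obstacle is this last accounting step: fractional Helly alone gives only geometric decay in $|\mathcal{H}|$, while LP duality alone gives an additive decrease of at most $1$ per point-pick, so the two must be delicately combined to produce a bound independent of $|\mathcal{H}|$. The standard Alon--Kleitman template resolves this by alternating density rounds with matching-weight accounting and closing by induction on $\lceil\tau^*\rceil$; the real content of the proof is arranging the parameters $q$, $\beta$, and the sampling thresholds so that each alternation either covers all remaining sets or strictly decreases $\lceil\tau^*\rceil$.
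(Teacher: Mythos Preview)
The paper does not prove this theorem; it is quoted verbatim from Alon, Kalai, Matou\v{s}ek and Meshulam \cite{alon2002transversal} and used as a black box, so there is no in-paper proof to compare against. I can still assess your outline on its own merits.

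Your steps up through the Jensen computation and the first application of fractional Helly are correct and standard: from a fractional transversal of value $\tau^*$ you do get a measure $\mu$ with $\mu(H)\ge 1/\tau^*$, and convexity of $x\mapsto x^q$ does give that at least a $(\tau^*)^{-q}$ fraction of $q$-tuples from $\mathcal{H}$ intersect, so FH yields a point in a $\beta((\tau^*)^{-q})$-fraction of the sets.

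The gap is exactly where you flag it, and your proposed fix does not close it. Iterating the density step gives geometric decay in $|\mathcal{H}|$, hence only an $O(\log|\mathcal{H}|)$ bound. Your claim that ``after a bounded number of density rounds the residual fractional matching must drop by $\Omega(1)$'' is not justified and is in general false: the point produced by FH may hit a large \emph{number} of sets that nevertheless carry arbitrarily small total weight in the optimal fractional matching, so neither $\nu^*$ nor $\lceil\tau^*\rceil$ need decrease. Induction on $\lceil\tau^*\rceil$ therefore does not get off the ground.

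More tellingly, your argument never uses the hypothesis on $\mathcal{H}^\cap$ as opposed to $\mathcal{H}$; you only ever apply FH to subfamilies of $\mathcal{H}$ itself. (Your sentence ``since each such $q$-fold intersection lies in $\mathcal{H}^\cap$, the hypothesis FH$(q,\beta)$ yields\ldots'' conflates the sets to which FH is applied with the witnesses of nonempty intersection.) In the actual AKMM proof, closure under intersection is essential: one samples a large multiset $X$ from $\mu$ and, for each $q$-tuple $I\subseteq X$, forms the set $K_I=\bigcap\{H\in\mathcal{H}:I\subseteq H\}\in\mathcal{H}^\cap$; FH is then applied to the finite family $\{K_I\}$, and a point deep in many $K_I$'s is shown to pierce every $\varepsilon$-heavy $H$, yielding a weak $\varepsilon$-net of size depending only on $\varepsilon$, $q$, $\beta$. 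Without passing to $\mathcal{H}^\cap$ in this way, one cannot eliminate the dependence on $|\mathcal{H}|$.
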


Note that if $\mathcal{H}^\cap$ satisfies $F(q, \beta)$ and $\mathcal{H}$ satisfies the $(p,q)$ condition, then its transversal number is bounded by a function of $p,q$ and $\beta$ as a consequence of the above two theorems. As the family of convex sets of $\mathbb{R}^d$ is closed under intersection, and satisfies $FH(d+1,\beta)$, this provides an alternative proof of Theorem~\ref{thm:pq} of Alon and Kleitman. But the same reasoning is usually not applicable to questions about $k$-transversals with $k > 0$, as the corresponding $\mathcal{H}^\cap \neq \mathcal{H}$ is hard to analyze, for example, for balls the intersections are no longer fat.
However, in this special case, another proof method works.

If we restrict the question to $k$-transversals of balls [caps], as in Theorem~\ref{thm:Cpqballs} [Theorem~\ref{thm:Cpqcaps}], then we can bound $\tau$ with a function of $\tau^*$ and thus prove our $(p,k+2)$-theorems. They are consequences of a classical result of Haussler and Welzl~\cite{haussler1986epsilon}
. The VC-dimension of $\mathcal{H}$ is the supremum of sizes of subsets $S \subseteq V$ with the property that for every $X \subseteq S$ there is an $H \in \mathcal{H}$ with $H \cap S = X$.

\begin{theorem}[Haussler and Welzl \cite{haussler1986epsilon}]\label{thm:eps}
    For every $d$ there exists a function $f_d$ such that if $\mathcal{H}$ has VC-dimension at most $d$, then $\tau(\mathcal{H}) \leq f_d(\tau^*(\mathcal{H}))$.
\end{theorem}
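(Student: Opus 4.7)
The plan is to reduce the theorem to the classical $\varepsilon$-net theorem for range spaces of bounded VC-dimension, which is the heart of the Haussler--Welzl argument. Fix an optimal fractional transversal $t\colon V \to [0,1]$ with $\sum_{v \in V} t(v) = \tau^*(\mathcal{H})$ and $\sum_{v \in H} t(v) \geq 1$ for every $H \in \mathcal{H}$. After normalization, $\mu(v) = t(v)/\tau^*(\mathcal{H})$ defines a probability measure on $V$ satisfying $\mu(H) \geq 1/\tau^*(\mathcal{H})$ for every $H \in \mathcal{H}$. Hence any $\varepsilon$-net for $(V, \mathcal{H}, \mu)$ with $\varepsilon = 1/\tau^*(\mathcal{H})$ is automatically a transversal of $\mathcal{H}$, and it remains only to produce a small one.

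For this I would take a random sample $N \subset V$ of size $n = C d \tau^*(\mathcal{H}) \log(d \tau^*(\mathcal{H}))$ drawn i.i.d.\ from $\mu$, with $C$ a sufficiently large absolute constant, and show via the double-sampling trick of Vapnik--Chervonenkis that $N$ is an $\varepsilon$-net with positive probability. Draw two independent $\mu$-samples $N_1, N_2$ of size $n$. A Chernoff estimate gives that, conditioned on $N_1$ missing some $H \in \mathcal{H}$ with $\mu(H) \geq \varepsilon$, we have $|N_2 \cap H| \geq \varepsilon n / 2$ with probability at least $1/2$; hence the failure probability of $N_1$ is at most twice the probability that some $H$ witnesses this extreme imbalance between $N_1$ and $N_2$. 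Conditioning on the multiset $N_1 \cup N_2$, the Sauer--Shelah lemma bounds the number of distinct traces $H \cap (N_1 \cup N_2)$ by $O((2n)^d)$, and for each such trace of size at least $\varepsilon n/2$ the probability that all of it lands in $N_2$ is at most $2^{-\varepsilon n/2}$ by symmetry of the two samples. A union bound then pushes the total failure probability strictly below $1$ for $n$ of the stated order.

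The main subtlety I anticipate is measurability when $V$ is infinite, as in the $k$-flat applications of this section: one must know that the fractional hitting-set LP attains its optimum and that $\mu$ can be realized as a sampleable probability measure on the space $V$ of $k$-flats. This is handled in the standard way by exhausting $V$ with finite substructures, solving the LP on each, and passing to a weak-$*$ limit; since the final conclusion only asserts the existence of a finite transversal, this limiting step costs nothing in the size bound. Taking $f_d(t) = C d t \log(d t)$ then completes the proof.
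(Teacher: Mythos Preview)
The paper does not prove Theorem~\ref{thm:eps} at all; it is quoted as a classical result of Haussler and Welzl and used as a black box. Your sketch is the standard $\varepsilon$-net argument from the original Haussler--Welzl paper (LP duality to get a probability measure under which every $H$ has mass at least $1/\tau^*$, then the Vapnik--Chervonenkis double-sampling/Sauer--Shelah argument to produce a small net), and it is correct. Since the paper itself offers no proof, there is nothing to compare beyond noting that your outline matches the source it cites.
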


The VC-dimension of $k$-flats intersecting balls [caps] is bounded by the Milnor-Thom theorem on sign patterns of polynomials \cite{milnor1964betti, thom1965homologie}, because a family of $k$-flats intersecting a given ball [cap] can be described by a bounded number of polynomial inequalities of bounded degree.

As for $k$-flats intersecting $\rho$-fat convex sets neither Theorem~\ref{thm:weakeps}, nor Theorem~\ref{thm:eps} is applicable, the missing ingredient is proving a $(p,q)$-theorem for $k$-flats intersecting $\rho$-fat convex sets.

\begin{conjecture}
    There is a function $f$ such that if $\mathcal{H}$ consists of families of $k$-flats intersecting members of a family of $\rho$-fat convex sets in $\mathbb{R}^d$, then $\tau(\mathcal{H}) \leq f(\tau^*(\mathcal{H}))$.
\end{conjecture}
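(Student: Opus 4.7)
The natural plan is to fit into the abstract Alon--Kalai--Matou\v sek--Meshulam machinery of Theorem~\ref{thm:weakeps}: combined with the fractional Helly Theorem~\ref{thm:FHballs} proved in Section~\ref{sec:fh}, it would suffice to show that the intersection-closed family $\mathcal{H}^\cap$ itself satisfies a fractional Helly property $\operatorname{FH}(q,\beta)$ for some $q$ and $\beta$ depending only on $(d,k,\rho)$. This would immediately yield $\tau(\mathcal{H}) \leq f(\tau^*(\mathcal{H}))$, which is precisely the conjectured inequality.

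To transfer fractional Helly from $\mathcal{H}$ to $\mathcal{H}^\cap$, I would attempt a representative-selection argument. Every member $A\in\mathcal{H}^\cap$ is of the form $\bigcap_{K\in S_A}\mathcal{L}(K)$, where $\mathcal{L}(K)$ denotes the $k$-flats hitting the $\rho$-fat body $K$ and $S_A$ is a finite subfamily of the given $\rho$-fat family. Given a collection of such $A_i$ with many $(k+2)$-wise intersecting tuples, I would pick from each $S_{A_i}$ the body with smallest bounding radius $R_K$, mirroring the charging scheme used in the proof of Theorem~\ref{thm:CFHballs}, and hope that the selected system $\{\mathcal{L}(K_i)\}_i$ retains a comparable fraction of intersecting $(k+2)$-tuples. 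Theorem~\ref{thm:FHballs} applied to these $K_i$ would then produce a common $k$-transversal for a positive fraction, which also pierces the corresponding $A_i$.

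The main obstacle, as the paper explicitly emphasizes, is that an intersection $\bigcap_K \mathcal{L}(K)$ is not $\mathcal{L}$ of any fat body and may correspond to an arbitrarily ``thin'' effective set. Consequently the projection-plus-volumetric-packing pivot of Section~\ref{sec:fh} has nothing fat to pivot on after representative selection, and the representative need not control the whole intersection. An alternative route is a dyadic multi-scale reduction: partition $\mathcal{F}$ into classes $\mathcal{F}_i=\{K : r_K\in[2^i,2^{i+1})\}$, each of which is $(r,R)$-fat with $R/r\leq 2\rho$, apply Theorem~\ref{thm:pqballs} together with the balls-to-fat-sets reduction sketched in the paper to each class separately, and then combine. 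The genuine difficulty is coupling these per-scale transversals into a single globally bounded transversal in terms of $\tau^*$ alone: bounded transversals per scale do not sum to anything bounded, so one would need a weighted $\varepsilon$-net iteration that charges each scale against a single $\tau^*$-budget, with the charge surviving across scales. I expect this scale-coupling step to be where an essentially new combinatorial-geometric idea is required, which is consistent with the conjecture being open.
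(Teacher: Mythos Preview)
This statement is a \emph{conjecture}, not a theorem: the paper does not prove it and explicitly leaves it open. Your proposal correctly does not claim a proof either; it is a discussion of why the two standard routes to bounding $\tau$ in terms of $\tau^*$ both fail here, together with a speculative third route.

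Your analysis is aligned with, and somewhat more detailed than, the paper's own. The paper observes that the AKMM Theorem~\ref{thm:weakeps} requires $\mathcal{H}^\cap$ to satisfy a fractional Helly property, and that this is problematic because intersections of the sets $\mathcal{L}(K)$ need not correspond to fat bodies; you identify the same obstacle and additionally sketch why a naive representative-selection repair does not salvage the argument. The paper also notes that the Haussler--Welzl route via bounded VC-dimension, which works for balls, does not apply to general $\rho$-fat convex sets; you do not discuss this second obstruction explicitly, but it is consistent with your conclusion. Your dyadic multi-scale idea (partitioning by $r_K\in[2^i,2^{i+1})$ into $(r,R)$-fat classes and invoking the per-class $(p,q)$ result) goes beyond what the paper says; your diagnosis that the difficulty is coupling the per-scale transversals against a single $\tau^*$ budget is reasonable, and indeed this is where any genuine proof would need a new idea. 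In short: there is no proof to compare, and your assessment of the obstructions matches the paper's.
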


\section*{Acknowledgement}

We would like to thank Andreas Holmsen for useful discussions, and for bringing \cite{matousek2004bounded} to our attention.

\bibliographystyle{plain}
\bibliography{biblio}

\end{document}